\documentclass[final,12pt]{elsarticle}
\usepackage{xcolor,soul,cancel}
\usepackage[color]{showkeys}
\definecolor{refkey}{rgb}{0,1,1}
\definecolor{labelkey}{rgb}{1,0,0}
\usepackage{amssymb}
\usepackage{amsmath}
\usepackage{amsthm}
\usepackage{graphicx}
\usepackage{float}
\journal{arXiv}

\newtheorem{thm}{Theorem}

\newtheorem{cor}{Corollary}
\newtheorem{prop}[thm]{Proposition}
\newtheorem{Ex}{Example}
\numberwithin{equation}{section}
\newcommand{\eq} [1] {\begin{equation}\label{#1}\quad}
\newcommand{\en} {\end{equation}}
\newcommand{\scal}[1]{\langle#1\rangle}
\newcommand{\norm}[1]{\left\Vert#1\right\Vert}
\newcommand{\abs}[1]{\left\vert#1\right\vert}

\newcommand{\C}{\mathbb C}
\newcommand{\M}{{\bf M}}
\newcommand{\R}{\mathbb R}

\newcommand{\im}{\operatorname{Im}}

\newcommand{\re}{\operatorname{Re}}

\begin{document}

\begin{frontmatter}

\title{Kippenhahn curves of some tridiagonal matrices \tnoteref{support}}

\author[MUC]{Nat\'alia Bebiano}
\address[MUC]{Bepartamento de Matem\'atica, Universidade da Coimbra, Portugal}
\ead{bebiano@mat.uc.pt}
\author[FUC]{Jo\'ao da Provid\'encia}
\address[FUC]{Departamento de F\'isica, Universidade da Coimbra, Portugal}
\ead{providencia@uc.pt}
\author[nyuad]{Ilya M. Spitkovsky}
\ead{ims2@nyu.edu, ilya@math.wm.edu, imspitkovsky@gmail.com}
\address[nyuad]{Division of Science and Mathematics, New York  University Abu Dhabi (NYUAD), Saadiyat Island,
P.O. Box 129188 Abu Dhabi, United Arab Emirates}
\author[nyuad]{Kenya~Vazquez}
\ead{kvn222@nyu.edu}
\tnotetext[support]{The results are partially based on the Capstone project of [KV] under the supervision of [IMS]. The latter was also supported in part by Faculty Research funding from the Division of Science and Mathematics, New York University Abu Dhabi. The work of the first author [NB] was supported by the Centre for Mathematics of the University of Coimbra ---
UIDB/00324/2020, funded by the Portuguese Government through FCT/MCTES.}
\begin{abstract}
Tridiagonal matrices with constant main diagonal and reciprocal pairs of off-diagonal entries are considered. Conditions for such matrices with sizes up to 6-by-6 to have elliptical numerical ranges are obtained.
\end{abstract}

\end{frontmatter}

\section{Introduction} 

Let $\M_n$ stand for the algebra of all $n$-by-$n$ matrices $A$ with the entries $a_{ij}\in\C$, $i,j=1,\ldots n$. We will identify $A\in\M_n$ with a linear operator acting on $\C^n$, the latter being equipped with the standard scalar product $\scal{\cdot,\cdot}$ and the associated norm $\norm{x}:=\scal{x,x}^{1/2}$. The {\em numerical range} of $A$ is defined as \eq{nr} W(A)=\{\scal{Ax,x}\colon \norm{x}=1\}, \en see e.g. \cite[Chapter 1]{HJ2} or more recent \cite[Chapter~6]{DGSV}  for the basic properties of $W(A)$, in particular its convexity (the Toeplitz-Hausdorff theorem) and invariance under unitary similarities. 

For our purposes it is important that $W(A)$ is the convex hull of a certain curve $C(A)$ associated with the matrix $A$, described as follows. Using the standard notation \[ \re A=\frac{A+A^*}{2},\quad  \im A=\frac{A-A^*}{2i},\] let $\lambda_1(\theta),\ldots,\lambda_n(\theta)$ be the spectrum of $\re(e^{i\theta}A)$, counting the multiplicities. Then the tangent lines of $C(A)$ with the slope 
$\cot\theta$ are $\{e^{-i\theta}(\lambda_k(\theta)+it)\colon t\in\R\}$, $k=1,\ldots,n$. 
We will thus call the characteristic polynomial of $\re(e^{i\theta}A)$, \eq{nrgp} P_A(\lambda,\theta)=\det\left(\re(e^{i\theta}A)-\lambda I\right), \en  the {\em NR generating polynomial} of $A$. The connection between $C(A)$ and $W(A)$ was established by Kippenhahn \cite{Ki} (see also the English translation \cite{Ki08}) and following \cite[Chapter~13]{DGSV} we will call $C(A)$ the {\em Kippenhahn curve} of the matrix $A$. 

In algebraic-geometrical terms, $C(A)$ is the dual of a curve of degree $n$, which makes it a curve of {\em class} $n$. Class two curves are the same as curves of degree two, thus implying that for $A\in\M_2$ the Kippenhahn curve is an ellipse, in the case of normal $A$ degenerating into the doubleton of its foci.

A matrix $A\in\M_n$ is {\em tridiagonal}  if $a_{ij}=0$ whenever $\abs{i-j}>1$. We will be making use of the well known (and easy to prove) recursive relation for the determinants $\Delta_n$ of such matrices,
\eq{rec} \Delta_n=a_{nn}\Delta_{n-1}-a_{n-1,n}a_{n,n-1}\Delta_{n-2}. \en 

In this paper, we are interested mostly in tridiagonal matrices with an additional property that their main diagonal is constant: $a_{jj}:=a$, $j=1,\ldots,n$. Let us standardize the notation as follows:

\eq{tro} A(n;a,b,c)=\begin{bmatrix}a & b_1 & 0 & \ldots & 0 \\
	c_1 & a & b_2 & \ldots & 0 \\ \vdots & \ddots & \ddots & \ddots & \vdots \\ 0 & 0 & c_{n-2} & a & b_{n-1} \\ 0 & \ldots & 0 & c_{n-1} & a\end{bmatrix}. \en  
Since $W(A-\lambda I)=W(A)-\lambda$ for any $A\in\M_n$ and $\lambda\in\C$, it suffices to concentrate our attention on matrices $A(n;0,b,c)$ with the zero main diagonal. For $n>3$, we will restrict our attention further to what we will call {\em reciprocal} matrices. Namely, in addition to $A$ being of the form $A(n;0,b,c)$ we will also suppose that $b_jc_j=1$ for all $j=1,\ldots,n-1$, writing this symbolically as \eq{mrec} A=A(n;0,b,b^{-1}).\en 

Section~\ref{s:au} contains some properties of the NR generating polynomials and curves of tridiagonal, in particular reciprocal, matrices. A necessary condition for ellipticity of the numerical range is also established in this section, while its concrete implementations (along with the proof of sufficiency) for 4-by-4 and 5-by-5 reciprocal matrices are provided in Section~\ref{s:45}. The last two sections are devoted to 6-by-6 reciprocal matrices. In Section~\ref{s:big} criteria for the Kippenhahn curve of such matrices to contain an elliptical component are derived. By way of example it is also shown there that this component may be the exterior one thus guaranteeing the ellipticity of $W(A)$ without all the components of $C(A)$ being ellipses. Section~\ref{s:trel} in turn is devoted to the case when $C(A)$ consists of three ellipses and contains an example of a non-Toeplitz reciprocal matrix with this property. 

\section{Preliminary results}\label{s:au}

We start with some basic observations concerning NR generating polynomials of matrices \eqref{tro}.
\begin{prop}\label{th:sym}The NR generating polynomial of $A(n;0,b,c)$ is an even/odd function of $\lambda$ if $n$ is even (resp., odd). \end{prop}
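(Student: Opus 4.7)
The plan is to exploit a unitary similarity that flips the sign of the Hermitian part $H:=\re(e^{i\theta}A)$. Since $A=A(n;0,b,c)$ has a zero main diagonal, so does $e^{i\theta}A$, and therefore $H$ is a Hermitian tridiagonal matrix whose main diagonal is identically zero. The only nonzero entries of $H$ sit on the sub- and super-diagonal, i.e.\ at positions $(i,j)$ with $|i-j|=1$.

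First I would introduce the signature matrix $D=\diag(1,-1,1,-1,\ldots,(-1)^{n-1})$, which is real, diagonal, and unitary (and self-inverse). Conjugating $H$ by $D$ multiplies the entry $H_{ij}$ by $d_id_j^{-1}$, and for the only surviving entries (those with $|i-j|=1$) this factor equals $-1$. Hence
\[
DHD^{-1}=-H,
\]
so $H$ and $-H$ are similar and have the same characteristic polynomial.

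From this I obtain
\[
P_A(\lambda,\theta)=\det(H-\lambda I)=\det(-H-\lambda I)=(-1)^n\det(H+\lambda I)=(-1)^nP_A(-\lambda,\theta).
\]
Thus $P_A(\lambda,\theta)=P_A(-\lambda,\theta)$ when $n$ is even, and $P_A(\lambda,\theta)=-P_A(-\lambda,\theta)$ when $n$ is odd, which is precisely the asserted parity statement.

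There is essentially no obstacle here; the only point that needs care is observing that the zero main diagonal of $A$ is preserved both by multiplication by $e^{i\theta}$ and by taking the Hermitian part, so that the signature-flip trick applies to $H$ itself. As an alternative, the same conclusion can be reached by induction on $n$ using the tridiagonal determinantal recursion \eqref{rec}, noting that with $\Delta_0=1$ and $\Delta_1(\lambda)=-\lambda$ the recursion $\Delta_k(\lambda)=-\lambda\Delta_{k-1}(\lambda)-|H_{k-1,k}|^2\Delta_{k-2}(\lambda)$ preserves the parity $k\bmod 2$ at each step.
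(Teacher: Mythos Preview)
Your argument is correct. The signature-matrix trick $DHD^{-1}=-H$ with $D=\diag(1,-1,\dots,(-1)^{n-1})$ is a clean, conceptual way to obtain the parity identity in one step, and your derivation of $P_A(\lambda,\theta)=(-1)^nP_A(-\lambda,\theta)$ from it is sound.

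The paper takes the route you list as an alternative: it writes out $\re(e^{i\theta}A(n;0,b,c))-\lambda I$ explicitly as a tridiagonal matrix, applies the recursion \eqref{rec} to relate the determinant for size $n$ to those for sizes $n-1$ and $n-2$, and finishes by induction starting from the $n=1,2$ base cases. Compared with your similarity argument, this is slightly longer, but it has a payoff in the paper's context: the recursion is reused verbatim to track the coefficients in the proof of Proposition~\ref{th:nrpr}, where the explicit dependence on $\tau=\cos(2\theta)$ and on the $A_j$ is needed. Your signature-matrix approach gives the parity statement more directly but does not by itself expose that coefficient structure.
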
 
\begin{proof} Matrices $\re(e^{i\theta}A)-\lambda I$ are tridiagonal along with $A=A(n;0,b,c)$. More specifically, \[ \re(e^{i\theta}A(n;0,b,c))-\lambda I= A\left(n;-\lambda, (e^{i\theta}b+e^{-i\theta}\overline{c})/2, (e^{-i\theta}\overline{b}+e^{-i\theta}c)/2\right)\] and so \eqref{rec} implies  \begin{multline*} \det\left( \re(e^{i\theta}A(n;0,b,c))-\lambda I\right)
=-\lambda\det\left( \re(e^{i\theta}A(n-1;0,b,c))-\lambda I\right)\\ -\frac{\abs{e^{i\theta}b_{n-1}+e^{-i\theta}\overline{c_{n-1}}}^2}{4}\det\left( \re(e^{i\theta}A(n-2;0,b,c))-\lambda I\right).  \end{multline*} Since $\det\left( \re(e^{i\theta}A(1;0,b,c)\right)=-\lambda$ and \[ \det\left( \re(e^{i\theta}A(2;0,b,c)\right)=\lambda^2-{\abs{e^{i\theta}b_{1}+e^{-i\theta}\overline{c_{1}}}^2}/{4},\] the result follows by induction. 
\end{proof} 
\begin{cor}\label{co:sym} The Kippenhahn curve of $A=A(n;0,b,c)$ is central symmetric, for $n$ odd having the origin as one of its components.  \end{cor}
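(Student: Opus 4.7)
The plan is to deduce both statements from the parity of $P_A(\lambda,\theta)$ in $\lambda$ established in Proposition~\ref{th:sym}. Since $P_A(\cdot,\theta)$ is either even or odd (according to the parity of $n$) and its roots are the real eigenvalues of the Hermitian matrix $\re(e^{i\theta}A)$, these roots come in $\pm$-pairs, with $0$ occurring whenever $n$ is odd.

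For the central symmetry, I would observe that the involution $z\mapsto -z$ sends the tangent line $\{e^{-i\theta}(\lambda_k(\theta)+it):t\in\R\}$ associated with the root $\lambda_k(\theta)$ onto $\{e^{-i\theta}(-\lambda_k(\theta)+is):s\in\R\}$ (take $s=-t$), which is exactly the tangent line associated with the root $-\lambda_k(\theta)$. Hence the full family of tangent lines indexed by $(k,\theta)$ is invariant under central reflection, and since $C(A)$ is the envelope of this family, $C(A)$ itself is central symmetric about the origin.

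For the second assertion (the case $n$ odd), Proposition~\ref{th:sym} yields a factorization $P_A(\lambda,\theta)=\lambda\cdot Q(\lambda,\theta)$ with $Q$ even in $\lambda$. The factor $\lambda$ corresponds, via the Kippenhahn/duality correspondence recalled in the introduction, to the origin viewed as a degenerate component: for every $\theta$ the root $\lambda=0$ produces the tangent line $\{ie^{-i\theta}t:t\in\R\}$ passing through $0$, and as $\theta$ varies over $[0,\pi)$ these tangents sweep out every line through the origin. The unique class-one "curve" whose tangents are all lines through a fixed point is that point itself, so the origin is an (irreducible, degenerate) component of $C(A)$, with the remaining components encoded by $Q$.

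I do not expect a real obstacle; the only delicate step is phrasing the degenerate-component interpretation of the factor $\lambda$ consistently with the class-$n$ viewpoint on $C(A)$ already recalled in the introduction, so that "the origin is a component" receives a precise algebraic-geometric meaning rather than merely saying that each $C(A)$ contains $0$.
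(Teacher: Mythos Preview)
Your argument is correct and is exactly the implicit reasoning the paper relies on: the corollary is stated without proof as an immediate consequence of Proposition~\ref{th:sym}, and your proof simply spells out how the even/odd parity of $P_A(\cdot,\theta)$ forces the tangent-line family to be invariant under $z\mapsto -z$ and, for odd $n$, produces the factor $\lambda$ corresponding to the origin as a degenerate component. There is nothing to add.
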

In particular, the numerical range of $A(n;0,b,c)$ is central symmetric --- a fact observed for the first time (to the best of our knowledge) in \cite[Theorem~1]{Chien96}. If $n=3$, Corollary~\ref{co:sym} implies that $C(A)$ consists of an ellipse centered at the origin and the origin itself. Thus, $W(A)$ is an elliptical disk (degenerating into a line segment if $A$ is normal) centered at the origin. This fact also follows from \cite[Theorem 4.2]{BS04}.

If $A$ is a reciprocal matrix, then   \eq{bc} \abs{e^{i\theta}b_{j}+e^{-i\theta}\overline{c_{j}}}^2= \abs{b_j}^2+\abs{b_j}^{-2}+2\cos(2\theta)=2(A_j+\tau), \quad j=1,\ldots,n-1,\en
where for notational convenience we relabeled 
\eq{Aj} \abs{b_j}^2+\abs{c_j}^2:=2A_j \text{ and } \cos(2\theta)=\tau. \en  
Note that $A_j\geq 1$, and the extremal case \eq{a1} A_1=\ldots = A_{n-1}=1 \en is easy, due to the following: 
\begin{prop}\label{th:norm}An $n$-by-$n$ reciprocal matrix $A$ is normal if and only if \eqref{a1} holds. If this is the case, then $A$ is in fact hermitian, and $W(A)$ is the real line segment with the endpoints $\pm 2\cos\frac{\pi}{n+1}$. \end{prop}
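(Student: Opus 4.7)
The plan is to split the proof into three pieces: (i) condition \eqref{a1} implies $A$ is hermitian (so \emph{a fortiori} normal); (ii) normality forces \eqref{a1}; (iii) under \eqref{a1}, compute $W(A)$ explicitly.

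For (i), the reciprocity assumption $b_jc_j=1$ combined with $\abs{b_j}^2+\abs{c_j}^2=2A_j=2$ gives $\abs{b_j}^2+\abs{b_j}^{-2}=2$, and by AM-GM this forces $\abs{b_j}=1$. Then $c_j = 1/b_j = \overline{b_j}$, so $A^*=A$. This is the easy direction and also furnishes the setup for (iii).

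For (ii), I would compare the diagonal entries of $A^*A$ and $AA^*$, using the tridiagonal structure. A direct calculation gives
\[
(A^*A)_{11}=\abs{c_1}^2,\qquad (AA^*)_{11}=\abs{b_1}^2,
\]
and for $1<i<n$,
\[
(A^*A)_{ii}=\abs{b_{i-1}}^2+\abs{c_i}^2,\qquad (AA^*)_{ii}=\abs{c_{i-1}}^2+\abs{b_i}^2.
\]
Normality forces equality entry by entry. From $i=1$ one gets $\abs{b_1}=\abs{c_1}$; combined with $b_1c_1=1$, this yields $\abs{b_1}=1$, hence $A_1=1$. Proceeding by induction on $i$, once $\abs{b_{i-1}}=\abs{c_{i-1}}=1$ is known, the equality at position $i$ reduces to $\abs{c_i}^2=\abs{b_i}^2$, and again reciprocity upgrades this to $\abs{b_i}=1$, i.e., $A_i=1$. (The case $i=n$ is handled either by the same identity or by the analogous computation at position $n$.)

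For (iii), once \eqref{a1} holds, each $b_j=e^{i\varphi_j}$ lies on the unit circle, and I would diagonally gauge them away. Set $d_1=1$ and $d_{j+1}=d_j/b_j$; then $D=\diag(d_1,\dots,d_n)$ is unitary (all $|d_j|=1$), and a short computation shows that $D^*AD$ is the real symmetric tridiagonal Toeplitz matrix $A(n;0,\mathbf 1,\mathbf 1)$. Its eigenvalues are the classical values $2\cos\frac{k\pi}{n+1}$, $k=1,\dots,n$ (verifiable via the eigenvectors $x^{(k)}_j=\sin\frac{jk\pi}{n+1}$, or by recognizing the NR generating polynomial via \eqref{rec} and Chebyshev polynomials of the second kind). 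Since $A$ is hermitian and unitarily similar to $D^*AD$, we have $W(A)=\conv\sigma(A)=[-2\cos\frac{\pi}{n+1},\,2\cos\frac{\pi}{n+1}]$.

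The only subtle step is the induction in (ii): one must ensure that reciprocity is really being used at every stage to convert an equality of moduli $\abs{b_i}=\abs{c_i}$ into $\abs{b_i}=1$, and that the boundary cases $i=1$ and $i=n$ are correctly handled by the edge formulas for $(A^*A)_{ii}$ and $(AA^*)_{ii}$. Everything else is bookkeeping or classical.
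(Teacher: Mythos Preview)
Your proof is correct and follows essentially the same route as the paper: show $\abs{b_j}=\abs{c_j}$ from normality (the paper cites \cite[Lemma~5.1]{BS04} or ``direct verification'', which is exactly your diagonal comparison of $A^*A$ and $AA^*$), use reciprocity to upgrade this to $c_j=\overline{b_j}$ and hence $A=A^*$, then diagonally conjugate to the all-ones tridiagonal Toeplitz matrix and quote its spectrum. Your version is simply more explicit, supplying the inductive computation and the diagonal unitary that the paper only alludes to.
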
 
\begin{proof} If a tridiagonal matrix \eqref{tro} is normal, the equalities $\abs{b_j}=\abs{c_j}$ can be obtained via a direct verification, or by applying the normality criterion from \cite[Lemma~5.1]{BS04}. For a reciprocal $A$ from $\abs{b_j}=\abs{c_j}$ it then follows that $\overline{b_j}=c_j$, thus making $A$ hermitian. Moreover, such $A$ is unitarily similar to a tridiagonal Toeplitz matrix \eqref{tro} with $a=0$ and $b_j=c_j=1$. The description of $W(A)$ then follows from the well known formula \[ \lambda_j=2\cos\frac{\pi j}{n+1}, \quad j=1,\ldots, n \] for the eigenvalues of $A$ (see e.g. \cite[Theorem~2.4]{BGru}). \end{proof}
\noindent In this case $C(A)=\sigma(A)$. 

On the other hand, the case \eq{a2} A_1=\ldots = A_{n-1}:=A_0>1 \en
is covered by \cite{BS04}. According to \cite[Theorem~3.3]{BS04}, $W(A)$ is then an elliptical disk. Following the proof of this theorem more closely, as in \cite[Section~5]{CDRSS}, reveals that in fact the ``hidden'' components of $C(A)$ are also all elliptical. 
\begin{thm}\label{th:toel}Let \eqref{a2} hold. Then $C(A)=\cup_{j=1}^{\lceil{n/2}\rceil}\sigma_j E$, where $E$ is the ellipse with the foci
$\pm 1$ and the axes having lengths $\sqrt{2(A_0\pm 1)}$, while 
\eq{sij} \sigma_j=\cos\frac{j\pi}{n+1}, \quad j=1,\ldots, \lceil{n/2}\rceil .  \en 
\end{thm}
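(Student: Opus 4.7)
The plan is to use the recursion from the proof of Proposition~\ref{th:sym}. Under hypothesis \eqref{a2}, the coefficient $\abs{e^{i\theta}b_j+e^{-i\theta}\overline{c_j}}^2/4$ reduces to the $j$-independent quantity $(A_0+\tau)/2$, so setting $\alpha:=\sqrt{(A_0+\cos 2\theta)/2}$ the recursion collapses to the two-term constant-coefficient recurrence
\[ P_n = -\lambda P_{n-1} - \alpha^2 P_{n-2}, \qquad P_0=1,\ P_1=-\lambda. \]

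Next, I would solve this explicitly by matching against the Chebyshev polynomials $U_n$ of the second kind. The substitution $\lambda=2\alpha\mu$, together with a routine induction on $n$, should give
\[ P_n(\lambda,\theta) = (-\alpha)^n U_n\bigl(\lambda/(2\alpha)\bigr), \]
whose zeros are $\lambda_k(\theta)=2\sigma_k\alpha=\sigma_k\sqrt{2(A_0+\cos 2\theta)}$, $k=1,\dots,n$, with $\sigma_k$ as in \eqref{sij}. Because $\sigma_{n+1-k}=-\sigma_k$, the families of tangent lines indexed by $k$ and $n+1-k$ sweep out the same envelope under $\theta\mapsto\theta+\pi$ (or equivalently by the central symmetry of Corollary~\ref{co:sym}), so only the $\lceil n/2\rceil$ subcurves indexed by $j=1,\dots,\lceil n/2\rceil$ are genuinely distinct.

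The last step is to recognize each such envelope as a scalar multiple of the ellipse $E$. Squaring and using $\cos 2\theta=\cos^2\theta-\sin^2\theta$ produces
\[ \lambda_k^2(\theta) = 2\sigma_k^2\bigl[(A_0+1)\cos^2\theta+(A_0-1)\sin^2\theta\bigr], \]
which is the squared support function of an ellipse with axes in the coordinate directions proportional to $\sqrt{A_0\pm 1}$, dilated by $\sigma_k$. Matching the constants against the description of $E$ in the statement, one reads off that the $k$-th subcurve is $\sigma_k E$, and the union over $j=1,\dots,\lceil n/2\rceil$ yields the claimed decomposition.

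The main obstacle will be the Chebyshev identification: verifying the closed form $P_n=(-\alpha)^n U_n(\lambda/(2\alpha))$ with the correct signs and initial data, and then carefully matching the envelope to $E$ as stated (attending to the convention on axes versus semi-axes and the location of the foci $\pm1$). The central-symmetry pairing of the $n$ eigenvalues into $\lceil n/2\rceil$ envelopes and the recognition of a quadratic form in $(\cos\theta,\sin\theta)$ as the squared support function of an ellipse are both essentially routine.
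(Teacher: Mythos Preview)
Your plan is sound and the Chebyshev identification goes through exactly as you outline: the recursion $P_n=-\lambda P_{n-1}-\alpha^2 P_{n-2}$ with $P_0=1$, $P_1=-\lambda$ is indeed solved by $P_n=(-\alpha)^n U_n(\lambda/(2\alpha))$, yielding $\lambda_k(\theta)=2\alpha\cos\frac{k\pi}{n+1}$, and recognizing $\lambda_k^2=2\sigma_k^2[(A_0+1)\cos^2\theta+(A_0-1)\sin^2\theta]$ as the squared support function of an axis-aligned ellipse is routine. This is a genuinely different route from the paper's own treatment, which does not argue directly but instead invokes \cite{BS04} and \cite[Section~5]{CDRSS} for the ellipticity of all components and then merely records that the scaling factors $\sigma_j$ arise as the singular values of a specific $\lceil n/2\rceil\times\lfloor n/2\rfloor$ bidiagonal $0$--$1$ matrix $X$, with \eqref{sij} stated as the only addition to the cited material. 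Your Chebyshev approach is self-contained and more elementary: it produces \eqref{sij} immediately from the well-known zeros of $U_n$, bypassing both references and the auxiliary matrix $X$. One caution you already anticipated: when you match constants you will find that the $k$-th component has \emph{semi}-axes $\sigma_k\sqrt{2(A_0\pm1)}$ and hence foci at $\pm2\sigma_k$, so the base ellipse $E$ (i.e.\ the case $\sigma_k=1$) has foci at $\pm2$; the paper's simultaneous specification of foci $\pm1$ and ``axes having lengths $\sqrt{2(A_0\pm1)}$'' for $E$ carries an internal factor-of-$2$ inconsistency, not a flaw in your reasoning.
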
 
Note that the number of non-degenerate ellipses constituting $C(A)$ is $\lfloor n/2\rfloor$. Indeed, if $n$ is odd, then $\sigma_{\lceil n/2\rceil}=0$, and the respective component degenerates into the point $\{0\}$. 
 
The only difference in the statement of Theorem~\ref{th:toel} from the material already contained in \cite{CDRSS} is the explicit formula \eqref{sij} for the $s$-numbers $\sigma_j$ of the $\lceil{n/2}\rceil\times\lfloor{n/2}\rfloor$ matrix $X$ with the entries \[ x_{ij}=\begin{cases} 1 & \text{ if } i-j= 0, 1\\ 0 & \text{ otherwise.} \end{cases} \]

This covers in particular tridiagonal Toeplitz matrices, for which the ellipticity of their numerical ranges was established much earlier in \cite{E93}.   

In order to move beyond cases \eqref{a1} and \eqref{a2}, we need to further analyze properties of NR generating polynomials \eqref{nrgp}. 
Tracking the coefficients of $P_A(\lambda,\theta)$ in the proof of Proposition~\ref{th:sym} when matrices under consideration are reciprocal yields the following, more precise, statement. For convenience of notation, in addition to \eqref{Aj} we also introduce \[ \zeta=\lambda^2,\quad  k=\lfloor n/2\rfloor. \]

\begin{prop}\label{th:nrpr} Let $A\in\M_n$ be a reciprocal matrix. Then its NR generating polynomial has the form 
\eq{pn} P_n(\zeta,\tau)=\zeta^k+\sum_{j=0}^{k-1}p_j(\tau)\zeta^j, \en
premultiplied by $-\lambda$ in case $n$ is odd. Here $p_j$ are polynomials in $\tau$ of degree $k-j$ and coefficients depending only on $A_1,\ldots A_{n-1}$.
\end{prop}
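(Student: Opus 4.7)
The plan is to proceed by induction on $n$, using the three-term recursion already established in the proof of Proposition~\ref{th:sym}. For a reciprocal matrix $A=A(n;0,b,b^{-1})$, identity \eqref{bc} reduces the off-diagonal factor $|e^{i\theta}b_{n-1}+e^{-i\theta}\overline{c_{n-1}}|^2/4$ to $(A_{n-1}+\tau)/2$, so with $D_n(\lambda,\tau):=\det(\re(e^{i\theta}A)-\lambda I)$ that recursion reads
$$D_n=-\lambda\,D_{n-1}-\frac{A_{n-1}+\tau}{2}\,D_{n-2}.$$
Proposition~\ref{th:sym} gives $D_n=P_n(\zeta,\tau)$ for $n$ even and $D_n=-\lambda\,P_n(\zeta,\tau)$ for $n$ odd. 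Substituting these forms and splitting by parity converts the above into the two recursions
$$P_n=\zeta P_{n-1}-\tfrac{A_{n-1}+\tau}{2}P_{n-2}\ (n\text{ even}),\qquad P_n=P_{n-1}-\tfrac{A_{n-1}+\tau}{2}P_{n-2}\ (n\text{ odd}),$$
with base cases $P_0=P_1=1$.

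Next I would verify by induction three claims: (i) $P_n$ is monic in $\zeta$ of degree $k=\lfloor n/2\rfloor$; (ii) each coefficient $p_j(\tau)$ has $\tau$-degree at most $k-j$; and (iii) only $A_1,\ldots,A_{n-1}$ enter. For (i), observe that $k$ grows by one exactly when $n$ is even, matching the factor $\zeta$ on the right of the even-$n$ recursion, while in the odd-$n$ case $P_{n-1}$ already supplies the leading $\zeta^k$. For (ii), assign both $\tau$ and $\zeta$ weight one; a short weighted-degree check shows that both terms on the right of each recursion have total weighted degree at most $k$. Claim (iii) is immediate because only $A_{n-1}$ is introduced at step $n$.

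The main obstacle is upgrading (ii) to \emph{exact} degree $k-j$. I would tackle this by extracting the weighted-homogeneous top part $T_n(\zeta,\tau)$ of $P_n$, namely the sum of its monomials $\tau^a\zeta^j$ with $a+j=k$. Dropping the strictly lower-weighted $A_{n-1}$-contributions from the recursions leaves $T_n=\zeta T_{n-1}-(\tau/2)T_{n-2}$ ($n$ even) and $T_n=T_{n-1}-(\tau/2)T_{n-2}$ ($n$ odd), with $T_0=T_1=1$. By homogeneity, $T_n(\zeta,\tau)=\zeta^k S_n(\tau/\zeta)$, and both cases collapse into the single univariate recursion $S_n=S_{n-1}-(u/2)S_{n-2}$ with $S_0=S_1=1$. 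The substitution $u\mapsto -u$ turns this into $\widetilde{S}_n=\widetilde{S}_{n-1}+(u/2)\widetilde{S}_{n-2}$, for which a direct induction confirms that every coefficient stays strictly positive. Hence every coefficient of $S_n$---equivalently, the top-in-$\tau$ coefficient of each $p_j$---is nonzero, completing the proof.
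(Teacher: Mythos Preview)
Your proof is correct and follows the same underlying idea as the paper---namely, applying the three-term recursion from the proof of Proposition~\ref{th:sym} together with formula~\eqref{bc}. The paper, however, gives essentially no argument beyond the single sentence ``Tracking the coefficients of $P_A(\lambda,\theta)$ in the proof of Proposition~\ref{th:sym} when matrices under consideration are reciprocal yields the following, more precise, statement.'' In particular, the paper does not justify the \emph{exact} degree claim $\deg p_j=k-j$, which you handle via the weighted-homogeneous top part $T_n$ and the positivity of the sign-flipped recursion $\widetilde S_n=\widetilde S_{n-1}+(u/2)\widetilde S_{n-2}$. That part of your argument is genuinely more than what the paper supplies, and it is sound: the induction on $\widetilde S_n$ indeed shows that all $k+1$ coefficients are strictly positive, so each $p_j$ attains $\tau$-degree exactly $k-j$. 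Your derivation of the split recursions for $P_n$ and $T_n$ in the even/odd cases, and the collapse into the single univariate recursion for $S_n$, are also verified correctly.
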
 
In particular, NR generating polynomials of reciprocal matrices are invariant under the change $\theta\mapsto -\theta$. Combined with the central symmetry of $C(A)$,  this observation implies  

\begin{cor}\label{th:sym1}Let $A$ be a reciprocal matrix. Then its Kippenhahn curve $C(A)$, and thus also the numerical range $W(A)$, is symmetric about both coordinate axes. \end{cor}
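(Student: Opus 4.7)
The plan is to combine the central symmetry already provided by Corollary~\ref{co:sym} with an additional reflection symmetry about the real axis, obtained from the $\theta \mapsto -\theta$ invariance of $P_A(\lambda,\theta)$ pointed out right after Proposition~\ref{th:nrpr}.

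First I would record that, by Proposition~\ref{th:nrpr}, $P_A(\lambda,\theta)$ depends on $\theta$ only through $\tau=\cos(2\theta)$, so $P_A(\lambda,\theta)=P_A(\lambda,-\theta)$. Consequently the multisets of roots of $P_A(\cdot,\theta)$ and $P_A(\cdot,-\theta)$ coincide, i.e.\ the eigenvalues $\lambda_1(\theta),\ldots,\lambda_n(\theta)$ of $\re(e^{i\theta}A)$ agree (up to relabeling) with those of $\re(e^{-i\theta}A)$.

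Next I would use the tangent-line description of $C(A)$ recalled in the Introduction. The tangent line to $C(A)$ with slope $\cot\theta$ coming from the $k$-th eigenvalue is $L_k(\theta)=\{e^{-i\theta}(\lambda_k(\theta)+it)\colon t\in\R\}$. Since $\lambda_k(\theta)\in\R$, complex conjugation yields $\overline{L_k(\theta)}=\{e^{i\theta}(\lambda_k(\theta)+is)\colon s\in\R\}$, which by the previous paragraph is one of the tangent lines $L_j(-\theta)$. Hence the family of tangent lines is invariant under complex conjugation, so $C(A)$, as the envelope of this family, is symmetric about the real axis.

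Combining this reflection symmetry $z\mapsto\overline{z}$ with the central symmetry $z\mapsto -z$ furnished by Corollary~\ref{co:sym} gives invariance under $z\mapsto -\overline{z}$, i.e.\ symmetry about the imaginary axis. Thus $C(A)$ is symmetric about both coordinate axes, and this symmetry is automatically inherited by $W(A)=\conv C(A)$. The only mildly technical step is the tangent-line bookkeeping in paragraph three; everything else is a direct appeal to Proposition~\ref{th:nrpr} and Corollary~\ref{co:sym}.
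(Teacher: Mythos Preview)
Your argument is correct and follows exactly the route sketched in the paper: use Proposition~\ref{th:nrpr} to get the $\theta\mapsto-\theta$ invariance of $P_A(\lambda,\theta)$, deduce symmetry about the real axis, and then combine with the central symmetry of Corollary~\ref{co:sym}. The only difference is that you have spelled out the tangent-line bookkeeping that the paper leaves implicit.
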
 

We also need the following simple technical observation, which is a reformulation of a well known fact repeatedly used in the numerical range related literature (see, e.g., \cite{ChienHung} or \cite{Gau06}). It holds for all $A\in\M_n$, not just tridiagonal matrices. 

\begin{prop} \label{th:ell} The Kippenhahn curve $C(A)$ of $A\in\M_n$ contains an ellipse centered at the origin if and only if the NR generating polynomial of $A$ is divisible by \eq{ell} \lambda^2-(x\cos(2\theta)+y\sin(2\theta)+z)\en  with $x,y,z\in\R$ such that $z>\sqrt{x^2+y^2}$.  \end{prop}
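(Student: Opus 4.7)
The plan is to interpret the factor $\lambda^{2}-(x\cos(2\theta)+y\sin(2\theta)+z)$ as the squared support function of an origin-centered ellipse. Recall that each root $\lambda=\lambda_k(\theta)$ of $P_A(\lambda,\theta)$ gives rise to a tangent line of $C(A)$ of the form $\{e^{-i\theta}(\lambda+it):t\in\R\}$. A direct computation shows that this line, written in Cartesian form, is $x\cos\theta-y\sin\theta=\lambda$; equivalently, $\lambda$ is the signed distance from the origin to the tangent line whose outer unit normal makes angle $-\theta$ with the positive real axis. Thus, a connected component of $C(A)$ is an origin-centered ellipse if and only if the corresponding branch of roots $\lambda(\theta)$ is the support function (relative to the origin) of such an ellipse.

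Next I would record the explicit form of the support function $h(\phi)$ of an ellipse centered at the origin with semi-axes $a\geq b\geq 0$ whose major axis makes angle $\alpha$ with the positive real axis, namely $h(\phi)^{2}=a^{2}\cos^{2}(\phi-\alpha)+b^{2}\sin^{2}(\phi-\alpha)$. Applying the identity $\cos^{2}t=(1+\cos 2t)/2$ and $\sin^{2}t=(1-\cos 2t)/2$, and then the addition formula to $\cos(2\phi-2\alpha)$, with $\phi=-\theta$ this rewrites as
\[
h(-\theta)^{2}=\tfrac{a^{2}+b^{2}}{2}+\tfrac{a^{2}-b^{2}}{2}\cos(2\alpha)\cos(2\theta)-\tfrac{a^{2}-b^{2}}{2}\sin(2\alpha)\sin(2\theta).
\]
Setting $z=(a^{2}+b^{2})/2$, $x=\tfrac{a^{2}-b^{2}}{2}\cos(2\alpha)$, $y=-\tfrac{a^{2}-b^{2}}{2}\sin(2\alpha)$, the two roots $\lambda=\pm h(-\theta)$ of the factor $\lambda^{2}-(x\cos(2\theta)+y\sin(2\theta)+z)$ parametrize precisely the pair of parallel tangent lines of the ellipse. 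Observe that $\sqrt{x^{2}+y^{2}}=\tfrac{a^{2}-b^{2}}{2}$, so $z\geq\sqrt{x^{2}+y^{2}}$ always holds, with strict inequality exactly when $b>0$, i.e.\ the ellipse is non-degenerate.

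For the forward direction, an origin-centered ellipse contained in $C(A)$ yields, via the correspondence above, a pair of root branches $\pm h(-\theta)$ of $P_A(\cdot,\theta)$; by the B\'ezout/factor theorem applied to $P_A$ as a polynomial in $\lambda$, the quadratic $\lambda^{2}-h(-\theta)^{2}$ divides $P_A(\lambda,\theta)$, which by the computation above is the required factor with $z>\sqrt{x^{2}+y^{2}}$. For the converse, if $P_A(\lambda,\theta)$ is divisible by $\lambda^{2}-(x\cos(2\theta)+y\sin(2\theta)+z)$ with $z>\sqrt{x^{2}+y^{2}}$, then $z+x\cos(2\theta)+y\sin(2\theta)>0$ for every $\theta$, so the two real root branches $\pm\sqrt{z+x\cos(2\theta)+y\sin(2\theta)}$ are globally defined and smooth; solving for $a^{2},b^{2},\alpha$ from the identities above (which is possible precisely because $z>\sqrt{x^{2}+y^{2}}$) identifies these branches as the support function of a genuine origin-centered ellipse, which is therefore a component of $C(A)$.

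The main obstacle is only bookkeeping: matching the sign conventions for $\theta$ versus the outer normal angle, and verifying that the parametrization of the ellipse's support function exhausts all triples $(x,y,z)$ with $z>\sqrt{x^{2}+y^{2}}$ (which is immediate once one allows $\alpha\in[0,\pi)$ and $a\geq b\geq 0$). No substantive algebraic difficulty is expected, consistent with the claim that this is a simple technical reformulation.
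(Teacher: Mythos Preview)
The paper does not actually prove this proposition; it introduces it as ``a reformulation of a well known fact repeatedly used in the numerical range related literature'' and simply cites \cite{ChienHung} and \cite{Gau06}. So there is no proof in the paper to compare against.

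Your support-function argument is a correct and self-contained way to supply the missing justification. The identification of $\lambda_k(\theta)$ with the signed distance from the origin to the tangent line with normal direction $(\cos\theta,-\sin\theta)$ is right (it follows from $\Re(e^{i\theta}Z)=\lambda$), and the computation of the squared support function of an origin-centered ellipse as an affine function of $\cos(2\theta),\sin(2\theta)$ is standard and correctly carried out. The bijection between triples $(x,y,z)$ with $z>\sqrt{x^2+y^2}$ and parameters $(a,b,\alpha)$ with $a\ge b>0$ is exactly what makes both directions go through. One cosmetic issue: you use the symbols $x,y$ both for Cartesian coordinates in the tangent-line equation and for the parameters in the factor \eqref{ell}; rename the coordinates (say $X,Y$) to avoid confusion. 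Also, when you invoke the factor theorem, it is worth saying explicitly that for each fixed $\theta$ the two numbers $\pm h(-\theta)$ occur among the eigenvalues $\lambda_k(\theta)$ (because every tangent line of the ellipse is a tangent line of $C(A)$), so $\lambda^2-h(-\theta)^2$ divides $P_A(\lambda,\theta)$ as a polynomial in $\lambda$ for every $\theta$; the quotient then automatically has coefficients that are trigonometric polynomials in $\theta$, since both $P_A$ and the divisor do.
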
 

Combining the results stated above, we arrive at a necessary condition for a reciprocal matrix to have an elliptical numerical range. 

\begin{thm} \label{th:nec} Let $A$ be a reciprocal matrix with its numerical range being an elliptical disk. Then polynomial \eqref{pn} is divisible by $\zeta-(x\tau+z)$, where $\sqrt{z\pm x}$ are the lengths of the half-axes of $W(A)$. Equivalently, \eq{nescond} P_n(x\tau+z,\tau)=0 \text{ for all } \tau.\en \end{thm}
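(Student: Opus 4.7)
The plan is to convert the hypothesis that $W(A)$ is an elliptical disk into an algebraic divisibility in $\R[\zeta,\tau]$, with Corollary~\ref{th:sym1} providing the crucial bridge between geometry and algebra.

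Since $W(A)=\conv C(A)$, the bounding ellipse of the disk $W(A)$ is itself one of the components of $C(A)$. By Corollary~\ref{th:sym1} this ellipse is symmetric about both coordinate axes, hence centered at the origin with principal axes along those axes; let $\alpha$ and $\beta$ denote the corresponding half-axis lengths (along the $x$- and $y$-axes respectively). Proposition~\ref{th:ell} now furnishes real constants $x,y,z$ with $z>\sqrt{x^2+y^2}$ such that $P_A(\lambda,\theta)$ is divisible by $\lambda^2-(x\cos(2\theta)+y\sin(2\theta)+z)$. To identify these constants I would match support functions: the squared distance from the origin to the tangent line of the axis-aligned ellipse at outward normal angle $\theta$ equals $\alpha^2\cos^2\theta+\beta^2\sin^2\theta=\tfrac{\alpha^2+\beta^2}{2}+\tfrac{\alpha^2-\beta^2}{2}\cos(2\theta)$. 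Comparison with $x\cos(2\theta)+y\sin(2\theta)+z$ forces $y=0$, $x=(\alpha^2-\beta^2)/2$, and $z=(\alpha^2+\beta^2)/2$, whence $\sqrt{z\pm x}=\alpha,\beta$ as asserted.

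For the final step, I would descend from $P_A$ to $P_n$. By Proposition~\ref{th:nrpr}, $P_A(\lambda,\theta)=\epsilon(\lambda)\,P_n(\zeta,\tau)$ with $\zeta=\lambda^2$, $\tau=\cos(2\theta)$, and $\epsilon(\lambda)$ equal to $1$ or $-\lambda$ according to the parity of $n$. For each $\tau\in[-1,1]$, the number $\lambda_0=\sqrt{x\tau+z}$ is a strictly positive root of $\lambda^2-(x\tau+z)$ (positivity follows from $z>|x|\geq|x\tau|$), so $P_A(\lambda_0,\theta)=0$. Since $\epsilon(\lambda_0)\ne 0$, this forces $P_n(x\tau+z,\tau)=0$ for every $\tau\in[-1,1]$, and hence as a polynomial identity in $\tau$. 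By the remainder theorem applied in $\R[\tau][\zeta]$, this is precisely the divisibility of $P_n(\zeta,\tau)$ by $\zeta-(x\tau+z)$.

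I expect the most delicate point to be forcing $y=0$: one must translate the Cartesian-axis symmetries of $W(A)$ supplied by Corollary~\ref{th:sym1} into the vanishing of the $\sin(2\theta)$ coefficient appearing in Proposition~\ref{th:ell}. The subsequent descent from divisibility of $P_A$ to divisibility of $P_n$ is short but depends on the strict inequality $z>\sqrt{x^2+y^2}$, which ensures that $x\tau+z$ is strictly positive and that the extraneous factor $-\lambda$ for odd $n$ does not vanish along the curve $\lambda^2=x\tau+z$.
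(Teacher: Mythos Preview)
Your proof is correct and follows essentially the same approach as the paper's: use Corollary~\ref{th:sym1} to force the ellipse to be axis-aligned and centered at the origin, invoke Proposition~\ref{th:ell} to obtain the quadratic factor, read off $y=0$ and the half-axis relations, and finish with the remainder theorem. You supply two details the paper leaves implicit---the explicit support-function computation identifying $(x,y,z)$, and the descent from divisibility of $P_A(\lambda,\theta)$ to divisibility of $P_n(\zeta,\tau)$ via the positivity of $x\tau+z$---but these are elaborations rather than a different route.
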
 
\begin{proof} According to Corollary~\ref{th:sym1},  $W(A)$ has to be axes-aligned and centered at the origin. Since its boundary $E$ is a component of $C(A)$, Proposition~\ref{th:ell} implies that $P_n$ is divisible by \eqref{ell} with some triple $x,y,z$.  The geometry of $E$ means that $y=0$ and the values of $z\pm x$ are as described in the statement. Finally, \eqref{nescond} follows from the remainder theorem. \end{proof} 

Treating $P_n(x\tau+z,\tau)$ as a polynomial in $\tau$, \eqref{nescond} can be rewritten as the system of $k+1$ algebraic equations in $x,z,A_1,\ldots,A_{n-1}$. So, the $(n-1)$-tuples $\{A_1,\ldots,A_{n-1}\}$ for which this system is consistent form an algebraic variety defined by $k-1$ polynomial equations. 
	 
\section{Reciprocal 4-by-4 and 5-by-5 matrices} \label{s:45} 
In full agreement with Proposition~\ref{th:nrpr}, polynomials \eqref{pn} for $n=4$ and $n=5$ are as follows: 

\eq{p4}
P_4(\zeta,\tau)=\zeta^2 -\frac{1}{2}\zeta \left(A_1+A_2+A_3+3\tau\right)\\ +
\frac{1}{4}(A_1+\tau ) (A_3+\tau )
\en
and 
\begin{multline}\label{p5}
P_5(\zeta,\tau)=
 \zeta^2-\zeta \left(\frac{1}{2} (A_1+A_2+A_3+A_4)+2\tau \right)  \\
+\frac{1}{4}\big((A_1+\tau ) (A_3+\tau ) 
+(A_1+\tau )(A_4+\tau ) +(A_2+\tau )(A_4+\tau )\big).
\end{multline}
Using the explicit formulas \eqref{p4},\eqref{p5}, in the next two theorems we restate the necessary condition of Theorem~\ref{th:nec} in a constructive way and show that it is also sufficient. The proofs of these results have a similar outline, varying in computational details only.

\begin{thm}\label{th:4by4}A reciprocal $4$-by-$4$ matrix $A$ has an elliptical numerical range if and only if 
\eq{4by4} A_2=\phi A_1-\phi^{-1}A_3 \text{ or }  A_2=\phi A_3-\phi^{-1}A_1,  \en 
	where $\phi=\frac{\sqrt{5}+1}{2}$ is the golden ratio,
and at  least one of the inequalities $A_j\geq 1$ ($j=1,2,3$) is strict. Moreover, in this case $C(A)$ is the union of two nested axis-aligned ellipses centered at the origin.  \end{thm}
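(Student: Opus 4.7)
My plan is to apply Theorem~\ref{th:nec} and Proposition~\ref{th:ell} to the explicit polynomial \eqref{p4}, and then reverse-engineer the admissible values of $A_2$. If $W(A)$ is elliptical, Theorem~\ref{th:nec} forces $P_4(\zeta,\tau)$ to admit a real factor $\zeta-(x\tau+z)$; since $P_4$ is monic quadratic in $\zeta$, the complete factorization
\[P_4(\zeta,\tau)=(\zeta-(x_1\tau+z_1))(\zeta-(x_2\tau+z_2))\qquad (x_j,z_j\in\R)\]
must hold. Matching coefficients of $\tau^i\zeta^j$ with \eqref{p4} produces the system
\[x_1+x_2=\tfrac{3}{2},\ x_1x_2=\tfrac{1}{4},\ z_1+z_2=\tfrac{A_1+A_2+A_3}{2},\ x_1z_2+x_2z_1=\tfrac{A_1+A_3}{4},\ z_1z_2=\tfrac{A_1A_3}{4}.\]
The first two equations force $\{x_1,x_2\}=\{\phi^2/2,\phi^{-2}/2\}$ via $\phi\phi^{-1}=1$ and $\phi+\phi^{-1}=\sqrt{5}$. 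The two linear equations then determine $z_1,z_2$ uniquely in terms of $A_1,A_2,A_3$, and plugging the solution into the remaining bilinear equation will reduce, after routine simplification using $\phi^2+\phi^{-2}=3$, to the single quadratic
\[A_2^2-(A_1+A_3)A_2+(3A_1A_3-A_1^2-A_3^2)=0,\]
whose discriminant $5(A_1-A_3)^2$ produces exactly the two roots $A_2=\phi A_1-\phi^{-1}A_3$ and $A_2=\phi A_3-\phi^{-1}A_1$ listed in \eqref{4by4}.

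For sufficiency I will assume the first alternative (the other is obtained by the symmetry $A_1\leftrightarrow A_3$) and verify by direct expansion --- using $\phi^2-1=\phi$ and $\phi^{-2}-1=-\phi^{-1}$ --- the identity
\[P_4(\zeta,\tau)=\left(\zeta-\tfrac{\phi^2}{2}(\tau+A_1)\right)\left(\zeta-\tfrac{\phi^{-2}}{2}(\tau+A_3)\right).\]
Proposition~\ref{th:ell} then identifies each factor with an axis-aligned ellipse centered at the origin, of horizontal and vertical semi-axes $\phi\sqrt{(A_1\pm1)/2}$ and $\phi^{-1}\sqrt{(A_3\pm1)/2}$ respectively, non-degenerate precisely when $A_1>1$ (respectively $A_3>1$). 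Since the constraint $A_2\geq1$ combined with $A_1=A_3=1$ forces $A_2=\phi-\phi^{-1}=1$ (the hermitian case of Proposition~\ref{th:norm}), the standing hypothesis that some $A_j>1$ forces at least one of $A_1,A_3$ to be strict; the outer factor then describes a proper ellipse, so $W(A)$ is an elliptical disk. Nesting of the two components finally follows by comparing semi-axes: using the bound $A_3\leq\phi^2A_1-\phi$ extracted from $A_2\geq1$, the required inequalities $\phi^4(A_1\pm1)>A_3\pm1$ both reduce via $\phi^4-\phi^2=\phi^3$ to estimates that are automatic under $A_1\geq1$.

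\emph{Main obstacle.} The only conceptual step is the ansatz of the first paragraph, reducing ellipticity of $W(A)$ to a full linear-in-$\tau$ factorization of $P_4$ over $\R$; after that the work is algebraic bookkeeping, whose principal challenge is to manage the golden-ratio identities cleanly and to carry the strictness condition $A_j>1$ through the sufficiency direction so that the outer Kippenhahn component is certified as a proper (not degenerate) ellipse.
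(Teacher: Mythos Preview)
Your argument is correct and follows essentially the same route as the paper: both proofs reduce the ellipticity of $W(A)$ to a factorization $P_4(\zeta,\tau)=(\zeta-(x_1\tau+z_1))(\zeta-(x_2\tau+z_2))$ with $x_j,z_j$ real, solve the resulting coefficient system, and then check nesting of the two elliptical components. Your explicit factorization $P_4=(\zeta-\tfrac{\phi^2}{2}(\tau+A_1))(\zeta-\tfrac{\phi^{-2}}{2}(\tau+A_3))$ and the quadratic in $A_2$ with discriminant $5(A_1-A_3)^2$ are in fact tidier than the paper's version, which instead works with the surd expressions \eqref{xz} and the auxiliary relation \eqref{cons}; the paper's nesting check compares $z-z_1$ with $x-x_1$ rather than the semi-axes, but the two verifications are equivalent.

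One small point to tighten: in the sufficiency paragraph you deduce only that ``at least one of $A_1,A_3$ is strict'' and then assert that the outer factor is a proper ellipse, but the outer factor involves $A_1$ alone, so you actually need $A_1>1$. This does follow under the first alternative --- from $A_2\ge 1$ and $A_3\ge 1$ one gets $\phi A_1\ge 1+\phi^{-1}A_3\ge 1+\phi^{-1}=\phi$, hence $A_1\ge 1$ with equality forcing $A_2=A_3=1$ --- but you should say so explicitly.
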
 

\begin{proof} {\sl Necessity.} Case \eqref{a1} is excluded due to Proposition~\ref{th:norm}. 
When divided by $\zeta-(x\tau+z)$, polynomial \eqref{p4} yields the quotient $\zeta-(x_1\tau+z_1)$, where
\[ x_1=\frac{3}{2}-x, \quad z_1=\frac{1}{2}(A_1+A_2+A_3)-z,  \] while condition \eqref{nescond} takes the 
form of the system:  
\begin{align*}
\begin{cases}
x\left(x-\frac{3}{2}\right)+\frac{1}{4}=0 \\
z\left(z-\frac{1}{2}(A_1+A_2+A_3)\right)+\frac{1}{4}A_1A_3=0 \\ 
(x-\frac{3}{2})z+x\left(z-\frac{1}{2}(A_1+A_2+A_3)\right)+\frac{1}{4}(A_1+A_3)=0.
\end{cases}
\end{align*}
Solving the first two equations for $x$ and $z$, respectively: 
\eq{xz} x=\frac{3\pm\sqrt{5}}{4},\ z=\frac{1}{4}(A_1+A_2+A_3\pm\sqrt{(A_1+A_2+A_3)^2-4A_1A_3}). \en
Plugging these values of $x$ and $z$ into the last equation of the system reveals that it is consistent if and only if the signs in \eqref{xz} match, and 
\eq{cons} \sqrt{(A_1+A_2+A_3)^2-4A_1A_3}=\frac{\sqrt{5}}{5}(A_1+3A_2+A_3). \en Finally, solving \eqref{cons} for $A_2$ yields \eqref{4by4}. 

{\sl Sufficiency.} Given \eqref{4by4} and choosing upper signs in \eqref{xz}, we arrive at the factorization of \eqref{p4} as $\left( \zeta-(x\tau+z)\right)\left(\zeta-(x_1\tau+z_1)\right)$. Condition $A_1+A_2+A_3>3$ guarantees that $(0<)\ x<z$. From here and the relation $4(x\tau-z)(x_1\tau-z_1)=(A_1+\tau)(A_3+\tau)$ we conclude that $C(A)$ consists of a non-degenerate ellipse $E$ and an ellipse $E_1$, degenerating into the doubleton of its foci if $A_1=1$ or $A_3=1$. Furthermore, $E_1$ lies completely inside $E$. Indeed,   \[ z-z_1=2z-\frac{1}{2}(A_1+A_2+A_3)=\frac{1}{2} \sqrt{(A_1+A_2+A_3)^2-4A_1A_3}\]
is bigger than $\frac{\sqrt{5}}{2}$ due to \eqref{cons} while  $x-x_1=2x-\frac{3}{2}=\frac{\sqrt{5}}{2}$, implying that  $x\tau+z> x_1\tau+z_1$ for all $\tau\in [-1,1]$. 

So, $W(A)$ is the elliptical disk bounded by $E$, both $E$ and $E_1$ are centered at the origin due to the central symmetry of $C(A)$, and their major axes are horizontal because $x,x_1>0$.  \end{proof}

Visualizing reciprocal 4-by-4 matrices as points $\{A_1,A_2,A_3\}$ in $\R_+^3$ we see that those with elliptical numerical ranges form a 2-dimensional manifold $\mathfrak M_4$ described by \eqref{4by4}. These equations show that $\mathfrak M_4$ contains the ray \eqref{a2}, as it should according to Theorem~\ref{th:toel}. The same equations imply the following
\begin{cor}\label{th:obs4} The intersection of $\mathfrak M_4$ with any of the bisector planes $A_i=A_j$ ($i\neq j$) consists only of the ray \eqref{a2}. \end{cor}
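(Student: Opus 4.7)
The plan is to work directly from the parametric description of $\mathfrak M_4$ given by \eqref{4by4} in Theorem~\ref{th:4by4}: a triple $(A_1,A_2,A_3)$ lies on $\mathfrak M_4$ exactly when $A_2=\phi A_1-\phi^{-1}A_3$ or $A_2=\phi A_3-\phi^{-1}A_1$. For each of the three bisector planes I would substitute the defining equation $A_i=A_j$ into both branches and reduce to a linear relation among two variables.

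The single algebraic fact that drives every case is the golden-ratio identity $\phi^2=\phi+1$, rephrased as $\phi-\phi^{-1}=1$ and $1+\phi^{-1}=\phi$. For the plane $A_1=A_3$ this is the quickest: the first equation of \eqref{4by4} becomes $A_2=(\phi-\phi^{-1})A_1=A_1$, and the second yields the same conclusion, so $A_1=A_2=A_3$. For the plane $A_1=A_2$ the first equation rearranges to $(1-\phi)A_1=-\phi^{-1}A_3$, and using $\phi-1=\phi^{-1}$ one obtains $A_3=A_1$; similarly the second equation gives $(1+\phi^{-1})A_1=\phi A_3$, which by $1+\phi^{-1}=\phi$ again reduces to $A_1=A_3$. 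The remaining case $A_2=A_3$ is handled verbatim after swapping $A_1$ and $A_3$, under which the two branches of \eqref{4by4} merely interchange.

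Finally, the statement of Theorem~\ref{th:4by4} requires at least one of the inequalities $A_j\geq 1$ to be strict, so the common value $A_0$ of $A_1=A_2=A_3$ must satisfy $A_0>1$, which is precisely the ray \eqref{a2}. I do not foresee any real obstacle: the computation is three substitutions followed by a one-step application of the golden-ratio identity. The only thing worth highlighting is that the arithmetic collapsing all three cases to the diagonal is the very identity $\phi-\phi^{-1}=1$ that led to $\phi$ appearing in \eqref{4by4} in the first place, which makes the corollary essentially a structural consequence of Theorem~\ref{th:4by4} rather than an independent calculation.
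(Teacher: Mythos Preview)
Your argument is correct and follows exactly the route the paper indicates: the corollary is stated as a direct consequence of \eqref{4by4}, and your substitution of each bisector condition into the two branches, reduced via the golden-ratio identities $\phi-\phi^{-1}=1$ and $1+\phi^{-1}=\phi$, is precisely the (omitted) verification. There is nothing to add.
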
 
It is worth mentioning that the ray $A_2=1, A_1=A_3>1$ corresponds to matrices with so called {\em bi-elliptical} numerical ranges, i.e. convex hulls of two non-concentric ellipses (see \cite[Theorem~7]{GeS2}).
 
\begin{thm}\label{th:5by5} A reciprocal $5$-by-$5$ matrix $A$ has an elliptical numerical range if and only if \eq{5by5} A_1=A_4 \text{ or } A_1-A_4=2(A_3-A_2) \en
and at  least one of the inequalities $A_j\geq 1$ ($j=1,\ldots,4$) is strict. Moreover, in this case $C(A)$ is the union of the origin and two nested axis-aligned ellipses centered there. 
\end{thm}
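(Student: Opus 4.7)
The plan is to follow the same template as the proof of Theorem~\ref{th:4by4}, with~\eqref{p5} in place of~\eqref{p4}. I will divide $P_5(\zeta,\tau)$ by $\zeta-(x\tau+z)$ and require the remainder to vanish, obtaining a quotient of the form $\zeta-(x_1\tau+z_1)$ together with a system of polynomial equations in $x,z,x_1,z_1,A_1,\ldots,A_4$. Matching the coefficient of $\zeta$ in \eqref{p5} forces $x_1=2-x$ and $z_1=\tfrac12(A_1+A_2+A_3+A_4)-z$, so the system reduces to three conditions on the constant term in $\zeta$. Comparing the $\tau^{2}$ coefficients gives $x(2-x)=3/4$, hence $x\in\{1/2,3/2\}$. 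Swapping $(x,z)\leftrightarrow(x_1,z_1)$ interchanges the two roots, so we may fix $x=3/2$, $x_1=1/2$; comparing the $\tau^{1}$ coefficients then determines $z$ and $z_1$ uniquely in terms of $A_1,\ldots,A_4$.

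The key step is the consistency condition coming from the constant-in-$\tau$ part of the constant-in-$\zeta$ coefficient, namely $zz_1=\tfrac14(A_1A_3+A_1A_4+A_2A_4)$. After substituting the explicit expressions for $z$ and $z_1$ and clearing denominators, the left-minus-right side must be rearranged and shown to factor as
\[
(A_1-A_4)\bigl[(A_1-A_4)-2(A_3-A_2)\bigr]=0,
\]
which is equivalent to \eqref{5by5}. I expect this algebraic factorization to be the main (though still elementary) obstacle: the raw expansion is a symmetric-looking quadratic in the $A_j$, and one must spot the grouping $(A_1-A_4)^{2}+2(A_2-A_3)(A_1-A_4)$. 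This handles necessity; the case \eqref{a1} excluded by Proposition~\ref{th:norm} takes care of the degenerate possibility $W(A)=\{0\}$.

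For sufficiency, I would reverse the computation: assuming \eqref{5by5}, the consistency condition holds, so $P_5(\zeta,\tau)$ factors as $(\zeta-(x\tau+z))(\zeta-(x_1\tau+z_1))$ with the explicit values obtained above. By Proposition~\ref{th:nrpr} the full NR generating polynomial is $-\lambda$ times this product, so $C(A)$ is the union of the origin and the two loci $\lambda^{2}=x\tau+z$ and $\lambda^{2}=x_1\tau+z_1$. It remains to check that these loci are centered axis-aligned ellipses (invoking Proposition~\ref{th:ell}) and to identify which is the outer one. For this I would verify that $z>x$ using
\[
z-x=\tfrac14(A_1+2A_2+2A_3+A_4)-\tfrac32=\tfrac14\bigl((A_1-1)+2(A_2-1)+2(A_3-1)+(A_4-1)\bigr)>0,
\]
which is exactly where the hypothesis that some $A_j>1$ is used, and similarly check $z_1\ge x_1$ with equality only in the degenerate subcase $A_1=A_4=1$. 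Finally I would compare the two curves on $\tau\in[-1,1]$: since $x-x_1=1$ and $z-z_1=\tfrac12(A_2+A_3)\ge 1$, the difference $(x-x_1)\tau+(z-z_1)\ge 0$ throughout, so the first ellipse encloses the second and hence bounds $W(A)$, giving the elliptical disk described in the statement.
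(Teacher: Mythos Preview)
Your proposal is correct and follows essentially the same template as the paper's proof. The only (inconsequential) difference is the order in which you use the three scalar equations: the paper solves the $\tau^{0}$-equation as a quadratic in $z$ (obtaining $z=\tfrac14(A_1{+}A_2{+}A_3{+}A_4\pm D)$), then uses the $\tau^{1}$-equation to force $D=A_2+A_3$, which is equivalent to~\eqref{5by5}; you instead fix $x=3/2$ first, solve the linear $\tau^{1}$-equation to get $z=\tfrac14(A_1{+}2A_2{+}2A_3{+}A_4)$, $z_1=\tfrac14(A_1{+}A_4)$ directly, and then read off~\eqref{5by5} from the factorization $(A_1-A_4)\bigl[(A_1-A_4)-2(A_3-A_2)\bigr]=0$ of the $\tau^{0}$-condition. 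Your sufficiency argument (the checks $z>x$, $z_1\ge x_1$, and $z-z_1\ge x-x_1$) is identical to the paper's.
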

So, in contrast with the case $n=4$, a reciprocal matrix $A\in\M_5$ can have an elliptical numerical range while some but not all of its parameters $A_j$ coincide.
\begin{proof} 
The factor $-\lambda$ of the NR generating polynomial of $A$ corresponds to $\{0\}$ as a component of $C(A)$. Having duly noted that, we proceed by using \eqref{p5} in place of \eqref{p4} but otherwise basically along the same lines as in the proof of Theorem~\ref{th:4by4}, with the computations surprisingly being even simpler. 

{\sl Necessity.} Case \eqref{a1} is excluded due to Proposition~\ref{th:norm}. 
When divided by $\zeta-(x\tau+z)$, polynomial \eqref{p5} yields the quotient $\zeta-(x_1\tau+z_1)$, where 
\[ x_1=2-x, \quad z_1=\frac{1}{2}(A_1+A_2+A_3+A_4)-z, \] while condition \eqref{nescond} takes form of the system 
\begin{align*}
\begin{cases}
x(x-2)+\frac{3}{4}=0  \\
z\left(z-\frac{1}{2}(A_1+A_2+A_3+A_4)\right)+\frac{1}{4}(A_1A_3+A_1A_4+A_2A_4)=0 \\ 
(x-2)z+x\left(z-\frac{1}{2}(A_1+A_2+A_3+A_4)\right)+\frac{1}{4}(A_1A_3+A_1A_4+A_2A_4)=0.
\end{cases}
\end{align*}
Solving the first two equations for $x$ and $z$, respectively: 
\eq{xz1} x= 1\pm\frac{1}{2},\quad  z=\frac{1}{4}(A_1+A_2+A_3+A_4\pm D),\en
where \[ D=\sqrt{(A_1+A_2+A_3+A_4)^2-4(A_1A_3+A_1A_4+A_2A_4)}. \]
Plugging these values of $x$ and $z$ into the last equation of the system reveals that it is consistent if and only if the signs in \eqref{xz1} match, and $D=A_2+A_3$. The latter condition is equivalent to \eqref{5by5}. 

{\sl Sufficiency.} Given \eqref{5by5} and choosing upper signs in \eqref{xz1}, we arrive at the factorization of \eqref{p5} as \eq{p5f} P_5(\zeta,\tau)=\left( \zeta-(x\tau+z)\right)\left(\zeta-(x_1\tau+z_1)\right).\en Conditions $D=A_2+A_3\geq 2$ and  $A_1+A_2+A_3+A_4>4$ guarantee that $(0<)\ x<z$. From here and the relation \[  4(x\tau-z)(x_1\tau-z_1)=(A_1+\tau)(A_3+\tau)+(A_1+\tau)(A_4+\tau)+(A_2+\tau)(A_3+\tau) \] we conclude that $C(A)$ consists of $\{0\}$, a non-degenerate ellipse $E$ and a (possibly degenerating into a doubleton) ellipse $E_1$. Furthermore, $E_1$ lies inside $E$, with a non-empty intersection occurring only if $A_2=A_3=1$. Indeed,   \[ z-z_1=2z-\frac{1}{2}(A_1+A_2+A_3+A_4)=\frac{1}{2}D=\frac{1}{2}(A_2+A_3)\geq 1\]  while  $x-x_1=2x-2=1$. 
\end{proof} 
For convenience of future use, let us provide the explicit form of factorization \eqref{p5f} when \eqref{5by5} holds:
\eq{fac5}\hspace{0cm} P_5(\zeta,\tau)=\begin{cases} \left(\zeta-\frac{A_1+A_2+A_3+3\tau}{2}\right)\left(\zeta-\frac{A_1+\tau}{2}\right)
\\
 \left(\zeta-\frac{2A_3+A_4+3\tau}{2}\right)\left(\zeta-\frac{A_3+A_4-A_2+\tau}{2}\right)\end{cases}\en if $A_1-A_4$ equals zero or $2(A_3-A_2)$, respectively. 
\begin{Ex}
For the $5\times 5$ reciprocal matrix \eqref{mrec} with the superdiagonal string $b=(1.5, 2 ,2.5, 1.5)$ we have
\[ A_1=A_4=1.347\overline{2},\ A_2=2.125,\ A_3=3.205, \]
and according to the upper line of \eqref{fac5} \[ P_5(\zeta,\tau)=\left(\zeta-\frac{ 6.67722+3\tau}{2} \right)\left(\zeta-\frac{1.34722+\tau}{2}\right).
\]
The Kippenhahn curve of this matrix, consisting of two ellipses and the origin, is shown in Figure~\ref{fig95E1}. 
\begin{figure}[H]
		\centering
		\includegraphics[width=0.75\linewidth,angle=0]{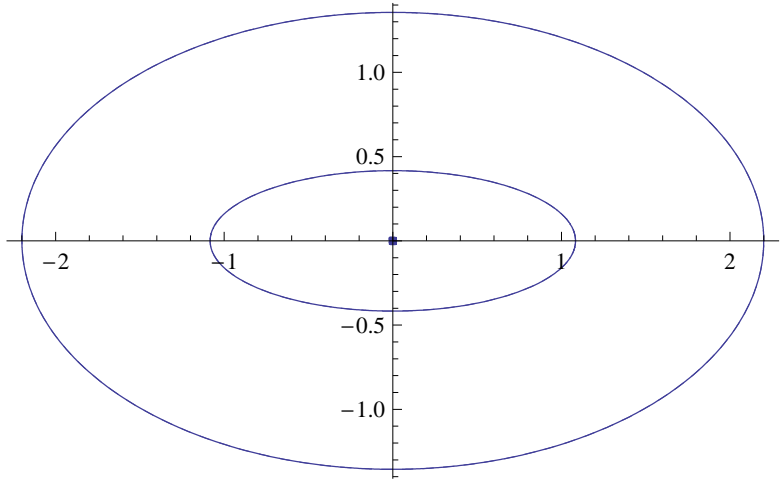}
		\caption{$b_1=1.5,~b_2=2,~b_3=2.5,~b_4=1.5.$ The curves are exactly elliptical.}
		\label{fig95E1}
	\end{figure}

Now let $A$ be a reciprocal matrix \eqref{mrec} with $b=(1.5,2,2,3)$. Then $A_1-A_4\approx -3.2$, $A_3-A_2=0$, and so \eqref{5by5} does not hold.  The Kippenhahn curve of this matrix is shown in Figure~\ref{fig95NE}. 
Its components are non-elliptical; the best fitting ellipses are pictured as  dotted curves.
\end{Ex} 
	\begin{figure}[h]
		\centering
		\includegraphics[width=0.75\linewidth,angle=0]{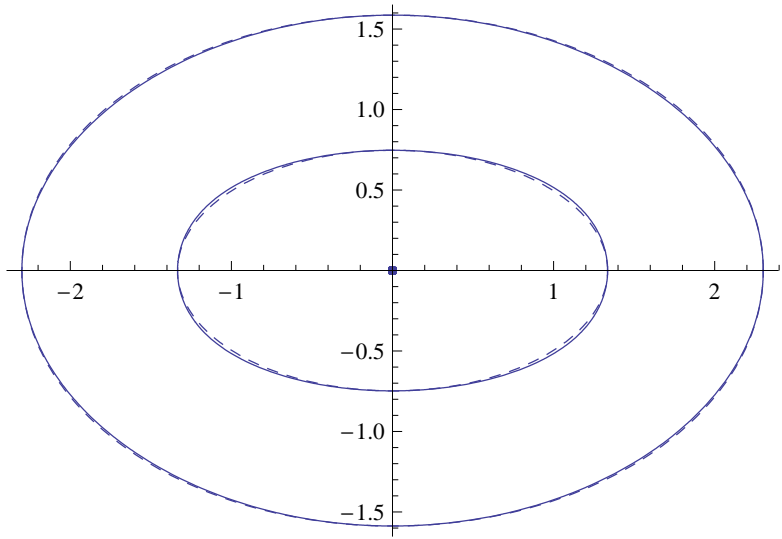}
		\caption{$b_1=1.5,~b_2=2,~b_3=2,~b_4=3$.  The curves look elliptical but they are not  exactly elliptical.\iffalse The deviation from ellipticity is  measured as 0.0120462.\fi}
		\label{fig95NE}
	\end{figure}

\section{Reciprocal 6-by-6 matrices with $C(A)$ containing an ellipse} \label{s:big}
When $n$ increases, things get more complicated. It may become impossible to state the divisibility condition from Proposition~\ref{th:ell} in exact arithmetic. Besides, this condition does not any longer guarantee the ellipticity of $W(A)$. We will illustrate these phenomena for $n=6$. 

Formula \eqref{pn} then takes the form:
\begin{multline} \label{p6} 
P_6(\zeta,\tau)= \zeta^3-\frac{1}{2}\zeta^2 \left(A_1+A_2+A_3+A_4+A_5+5\tau\right)\\ +\frac{1}{4}\zeta
\left(A_1\left(A_3+A_4+A_5\right)+A_2 \left(A_4+A_5\right)+A_3 A_5+\left(3(A_1+A_5)\right.\right. \\ +2\left.\left.(A_2+A_3+A_4)\right)\tau+6\tau^2\right) 
-\frac{1}{8} \left(A_1+\tau \right) \left(A_3+\tau \right) \left(A_5+\tau \right).
\end{multline} 
Consequently, 
\[  P_6(x\tau+z,\tau)=\frac{1}{8}\tau^3\left(8x^3-20x^2+12x-1\right)+\tau^2 Q_1(x,z)+\tau Q_2(x,z)+Q_3(x,z).\] 
Here 
\eq{z} Q_1=q_{11}z+q_{10}, \ Q_2=q_{22}z^2+q_{21}z+q_{20},\  Q_3= z^3+q_{32}z^2+q_{31}z+q_{30} \en 
with $q_{ij}$ given by
\begin{align*}
q_{11} = & 3 x^2-5 x+\frac{3}{2},\quad  q_{22} =  3 x-\frac{5}{2}, \\
q_{10} = & -\frac{1}{2}\left(A_1+A_2+A_3+A_4+A_5\right)
x^2\\ & +\frac{1}{4}\left(3 A_1+2A_2+2A_3+2A_4+3 A_5\right)x -\frac{1}{8}\left(A_1+A_3+A_5\right), \\ 
q_{21} = & -\left(A_1+A_2+A_3+A_4+A_5\right) x+\frac{1}{4}
	\left(3A_1+2A_2+2A_3+2A_4+3A_5\right), \\
q_{20} = &\frac{1}{4}\left(A_1 A_3+A_5 A_3+A_1 A_4+A_2 A_4+A_1A_5+A_2 A_5\right) x\\ & -\frac{1}{8}\left(A_1 A_3+A_5 A_3+A_1 A_5\right), \\
q_{32} = & -\frac{1}{2}\left(A_1+A_2+A_3+A_4+A_5\right),\\
q_{31} = & \frac{1}{4}\left(A_1 A_3+A_5 A_3+A_1 A_4+A_2 A_4+A_1A_5+A_2 A_5\right), \\
q_{30} = & -\frac{1}{8} A_1 A_3 A_5.
\end{align*}
Condition \eqref{nescond} therefore is nothing but the requirement for all three $Q_j$ from \eqref{z} to vanish at the same root of the cubic equation 
\eq{x} 8x^3-20x^2+12x-1= 0. \en 
In turn, this happens if and only if the resultant $R_{1}$ of $Q_1$ and $Q_2$, as well as the resultant  $R_{2}$ of $Q_1$ and $Q_3$, are equal to zero. A somewhat lengthy but straightforward computation, with the repeated use of \eqref{x}, reveals that $R_{1}, R_{2}$ can be treated as quadratic polynomials in $x$. Namely,
\eq{res} R_{j}(x)=r_{j2}x^2+r_{j1}x+r_{j0},\quad j=1,2, \en 		
where 	
\begin{align*}
	r_{12} = & -8(A_1^2 + A_2A_3+A_3A_4+A_5^2)-28(A_1A_2-A_1A_4-A_2A_5+A_4A_5)\\&
	+4(A_1A_3+A_3A_5+A_3^2)-12(A_2^2+A_4^2-A_1A_5)+32A_2A_4, \\
	r_{11} = & 12(A_2^2+A_4^2)+18(A_1A_2-A_2A_4+A_3A_5)-24(A_1A_4+A_2A_5)\\&
	+6(A_1A_3-A_3^2+A_3A_5),\\
	r_{10} = & 4(A_1^2-A_2^2-A_4^2+A_5^2+A_2A_3+A_3A_4)-(A_1A_2+A_2A_4+A_4A_5)\\&
	-7(A_1A_3+A_3A_5)+6(A_1A_4-A_1A_5+A_2A_5)+3A_3^2
\end{align*}
and
\begin{align*}
r_{22} = & -12(A_1^3+A_5^3) -48(A_1^2A_2+A_4A_5^2) -60(A_1A_2^2+A_4^2A_5)
	-16(A_2^3+A_4^3) \\& +44(A_1A_3^2-A_1A_2A_3+A_1A_2A_5 +A_3^2A_5+A_1A_4A_5-A_3A_4A_5) \\&
	-36(A_2^2A_3-A_2^2A_4-A_2A_4^2+A_3A_4^2) -24(A_2A_3^2+A_3^2A_4-A_1A_4^2-A_2^2A_5)\\&
	+8(A_1^2A_4+A_2A_5^2) +20(-A_1A_2A_4+A_1^2A_5+A_2A_4A_5+A_1A_5^2)\\&
	+68(A_1A_3A_4+A_2A_3A_5)-4A_3^3+40A_2A_3A_4-84(A_1A_3A_5),\\
r_{21}= & 14(A_1^3A_5^3-A_1A_4^2-A_2^2A_5) +56(A_1A_2^2+A_4^2A_5)
	+16(A_2^3+A_4^3)\\& -12(A_1^2A_3+A_3A_5^2) +30(A_2^2A_3+A_3A_4^2) -50(A_1A_3^2+A_3^2A_5)\\&
	+20(A_2A_3^2+A_3^2A_4)-22(A_1^2+A_2^2A_4+A_2A_4^2+A_2A_5^2) \\&
	-28(A_1A_2A_4+A_1^2A_5+A_2A_4A_5+A_1A_5^2) -66(A_1A_3A_4+A_2A_3A_5) \\&
	-44(A_1A_2A_5+A_1A_4A_5) +48(A_1^2A_2+A_4A_5^2) +46(A_1A_2A_3+A_3A_4A_5) \\&
	+6A_3^3+158A_1A_3A_5-52A_2A_3A_4,\\
\end{align*}\begin{align*} 
r_{20} = & -2(A_1^3+A_2^3+A_3^3+A_4^3+A_5^3+A_2A_3^2+A_3^2A_4)\\&
	-4(A_1^2A_2+A_1A_2^2+A_1A_4^2+A_2^2A_5+A_4^2A_5+A_4A_5^2)\\&
	+6(A_1^2A_3+A_1A_2A_4-A_2^2A_4-A_2A_4^2+A_1A_2A_5+A_1A_4A_5 +A_2A_4A_5+A_3A_5^2)\\&
	-10(A_1A_2A_3+A_3A_4A_5-A_1^2A_4-A_2A_5^2) -(A_2^2A_3+A_3A_4^2)\\&
	+12(A_1A_3^2+A_3^2A_5) +11(A_1A_3A_4+A_2A_3A_5) +8(A_1^2A_5+A_1A_5^2)\\&
	+19A_2A_3A_4-65A_1A_3A_5.
\end{align*}
We thus arrive at the following 
\begin{thm} \label{th:ell6} A reciprocal $6$-by-$6$ matrix $A$ has an elliptical component in its Kippenhahn curve $C(A)$ if and only if both polynomials \eqref{res} share a common root with equation \eqref{x}. 
\end{thm}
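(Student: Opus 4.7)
My plan is to recast ``there is an elliptical component'' as a polynomial divisibility condition on $P_6$, expand that condition into a system of equations in the auxiliary parameters $x$ and $z$, eliminate $z$ via resultants, and finally reduce the resulting polynomials in $x$ modulo the cubic~\eqref{x}.

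Thanks to Corollary~\ref{th:sym1}, any elliptical component in the reciprocal case is forced to be axis-aligned and centered at the origin, so by Proposition~\ref{th:ell} it corresponds to a factor $\zeta - (x\tau + z)$ of $P_6$ with real $x, z$ and $z > |x|$. As in the proof of Theorem~\ref{th:nec}, the remainder theorem turns this divisibility into the polynomial identity $P_6(x\tau + z, \tau) \equiv 0$ in $\tau$.

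Expanding $P_6(x\tau + z, \tau)$ using~\eqref{p6} produces a cubic in $\tau$; its leading coefficient is $\frac{1}{8}(8x^3 - 20x^2 + 12x - 1)$ and its other three coefficients are the $Q_1(x, z), Q_2(x, z), Q_3(x, z)$ recorded in~\eqref{z}. So the condition is that $x$ solves~\eqref{x} and the three polynomials $Q_j$ share a common zero $z$. Because $Q_1$ is linear in $z$, such a common zero exists iff both resultants $R_1 := \mathrm{Res}_z(Q_1, Q_2)$ and $R_2 := \mathrm{Res}_z(Q_1, Q_3)$ vanish at the chosen $x$. Thus existence of the desired factor reduces to~\eqref{x}, $R_1$, and $R_2$ having a common root in $x$.

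Finally, I would compute $R_1$ and $R_2$ as polynomials in $x$ (with coefficients in $A_1, \dots, A_5$) and use~\eqref{x} to systematically replace $8x^3$ by $20x^2 - 12x + 1$; after this reduction both resultants collapse to the quadratic polynomials with coefficients $r_{ji}$ listed in~\eqref{res}. The main obstacle is purely computational bookkeeping rather than anything conceptual; a secondary point to verify is that the degenerate case $q_{11}(x) = 0$ (if it occurs at a root of~\eqref{x}) is still correctly captured by the resultant formulation, so that the equivalence with~\eqref{res} is preserved.
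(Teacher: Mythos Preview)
Your approach is essentially the paper's: substitute $\zeta=x\tau+z$ into $P_6$, read off the cubic~\eqref{x} from the $\tau^3$ coefficient and the $Q_j$ from the rest, then eliminate $z$ via the resultants of $Q_1$ with $Q_2$ and $Q_3$, reducing modulo~\eqref{x} to obtain the quadratics~\eqref{res}.

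One point needs tightening. Your opening sentence claims that Corollary~\ref{th:sym1} forces \emph{every} elliptical component of $C(A)$ to be axis-aligned and centered at the origin; that does not follow, since symmetry of $C(A)$ as a whole does not force each irreducible component to be individually symmetric. The paper handles this separately (in the remark immediately after the theorem) by a counting argument: if $C(A)$ contains an ellipse $E$ not centered at the origin, then by central symmetry $-E\neq E$ is also a component, and the remaining class-two factor of the degree-six curve must then be a conic fixed by the symmetry, hence a centered ellipse. Either way an origin-centered ellipse is present, so restricting to factors $\zeta-(x\tau+z)$ is justified. With this adjustment your argument is complete; your side remark about checking $q_{11}(x)\neq 0$ at the roots of~\eqref{x} is a point the paper leaves implicit, and is indeed routine (the roots $(5\pm\sqrt{7})/6$ of $q_{11}$ are distinct from those of~\eqref{x}).
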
 
Note that the requirement on the elliptical component of $C(A)$ to be centered at the origin is not included in the statement of Theorem~\ref{th:ell6}. It holds automatically for the following reason: if $C(A)$ contains an ellipse $E$ not centered at the origin, then due to the symmetry it also contains $-E$ which is different from $E$. But them the third component of $C(A)$ also has to be elliptical and, since there cannot be four, this one is then centered at the origin.

Plugging in the approximate values \[ x_1 \approx  0.0990311,\ x_2 \approx  0.777479,\ x_3 \approx  1.62349 \]
for the roots of \eqref{x} into \eqref{res} thus delivers the numerical tests for $C(A)$ to contain an elliptical component. Due to the structure of $r_{ij}$, these are systems of two homogeneous polynomial equations (one quadratic and one cubic) in five unknowns $A_1,\ldots,A_5$. Fixing any three of $A_j$ (or otherwise reducing the number of free parameters to two) and solving for the other two yields the specific examples of matrices satisfying Theorem~\ref{th:ell6}.
\begin{Ex}Under additional constraints $A_1=A_5:=uA_3$, $A_2=A_4:=vA_3$ the polynomials \eqref{res} both have the root $x=x_3$ if and only if the pairs $(u,v)$ are as follows: $(1,1)$, $(8.84369,-2.49077)$, $(-1.80414,2.24796)$, and \eq{uv}(1.7724359313231006,\ 0.6562336702811362).\en  

The first solution corresponds to the situation \eqref{a2} in which all three components of $C(A)$ are elliptical; the next two are irrelevant because of non-positivity. The remaining pair \eqref{uv} delivers a non-trivial example in which one component of $C(A)$ is an ellipse while two others are not (the latter fact follows from Theorem~\ref{th:A1245} below). The respective $C(A)$ is shown in Figure~\ref{fig.oneel}. Only the outer component is an exact ellipse, though deviations of two others from being elliptical are almost negligent.
\end{Ex} 
\begin{figure}[h]
	\centering
	\includegraphics[width=0.75\linewidth,angle=0]{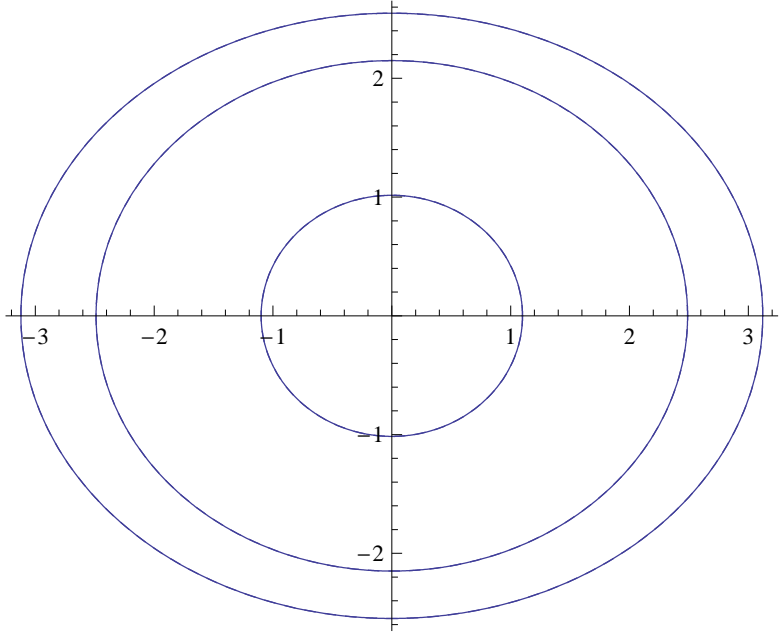}
	\caption{$C(A)$ for $A_1=A_5=3.28117,\, A_2=A_4=6.5623367,\, A_3=5$.}
	\label{fig.oneel}
\end{figure}

\section{Reciprocal 6-by-6 matrices with $C(A)$ consisting of three ellipses} \label{s:trel} 	
From Theorem~\ref{th:ell6} it follows that a reciprocal matrix $A\in\M_6$ has $C(A)$ consisting of three concentric ellipses (and thus an elliptical numerical range) if and only if $R_1(x_j)=R_2(x_j)=0$ for all three roots of \eqref{x}. This means exactly that $r_{jk}=0$ for all $j=1,2;\ k=0,1,2$. 

We have arrived at the system of six polynomial equations in five variables $A_1,\ldots, A_5$, which therefore seems overdetermined. As was already observed in Section~\ref{s:big}, the ellipticity of two components of $C(A)$ automatically implies that the third one is an ellipse, centered at the origin. So, the number of equations can be easily reduced to four. An alternative approach shows, however, that in fact reciprocal 6-by-6 matrices with $C(A)$ consisting of three ellipses are characterized by just three equations, and so form a two-parameter set.  

To describe this approach, observe that \eqref{p6} factors as $\prod_{j=1}^3\left(\zeta-(x_j\tau+z_j)\right)$ if and only if 
\begin{align*}
\begin{cases}
z_1 + z_2 + z_3 = \frac{1}{2} (A_1+A_2+A_3+A_4+A_5)\\
z_1z_2z_3= \frac{1}{8} A_1A_3A_5\\
z_1z_2+z_1z_3+z_2z_3 = \frac{1}{4} (A_1A_3+A_1A_4+A_1A_5+A_2A_4+A_2A_5+A_3A_5)\\
x_1 + x_2 + x_3 = \frac{5}{2} \\
x_1x_2x_3 = \frac{1}{8}\\
x_1x_2+x_1x_3+x_2x_3 = \frac{3}{2}\\
z_1x_2x_3 + z_2x_1x_3 + z_3 x_1x_2 = \frac{1}{8}(A_1+A_3+A_5)\\
z_1z_2x_3 + z_1z_3x_2 + z_2z_3x_1 = \frac{1}{8} (A_1A_5+A_1A_3+A_3A_5)\\
z_1(x_2+x_3)+z_2(x_1+x_3)+z_3(x_1+x_2) = \frac{3}{4}(A_1+A_5)+\frac{1}{2}(A_2+A_3+A_4).
\end{cases}
\end{align*}
The three equations of this system not containing variables $z_j$ mean exactly that $x_1,x_2,x_3$ are the roots of \eqref{x}. The three equations linear in $z_j$ can be rewritten as

\[ \begin{bmatrix}
1 & 1 & 1\\
x_2 + x_3 & x_1 + x_3 & x_1+x_2\\
x_2x_3 & x_1x_3 & x_1x_2 
\end{bmatrix}
\begin{bmatrix}
z_1 \\
z_2\\
z_3
\end{bmatrix}
= \begin{bmatrix}
\frac{1}{2} (A_1+A_2+A_3+A_4+A_5)\\
\frac{3}{4}(A_1+A_5)+\frac{1}{2}(A_2+A_3+A_4)\\
\frac{1}{8}(A_1+A_3+A_5)
\end{bmatrix}.
\]

Solving this system:
\eq{zj} z_j=\frac{R(x_j)}{8(x_j-x_i)(x_j-x_k)},\quad j=1,2,3, \en 
where $\{i,k\}=\{1,2,3\}\setminus\{j\}$ and $R(x)=r_2x^2+r_1x+r_0$ with \begin{multline}\label{R} r_2=4(A_1+A_2+A_3+A_4+A_5),\\ r_1=4(A_2+A_3+A_4)-6(A_1+A_5),\quad r_0=A_1+A_3+A_5. \end{multline}
 
Finally, the remaining three nonlinear equations in $z_j$ yield the following:
\begin{align*}
& 16 r_0^2+24 \left(r_1+r_2\right) r_0+12 r_1^2+15 r_2^2+28 r_1
	r_2\\&
	+112\left(A_1 A_3+A_5 A_3+A_1 A_4+A_2 A_4+A_1A_5+A_2 A_5\right)=0;\\ & 
32 r_0^2+8 \left(3 r_1+r_2\right) r_0+4 r_1^2+3 r_2^2+6 r_1r_2+112\left(A_1 A_3+A_5 A_3+A_1 A_5\right)=0;
\\	&\frac{1}{64} \Big(64 r_0^3+16 (10 r_1+13 r_2) r_0^2+8 (12 r_1^2+27 r_2
r_1+13 r_2^2) r_0+8 r_1^3+r_2^3\\&
+12 r_1 r_2^2+20 r_1^2 r_2\Big)+49 A_1 A_3 A_5=0
\end{align*}
or, plugging in the values of $r_j$ from \eqref{R}: 
\eq{tel2}\begin{aligned}
&  -(A_2^2+A_4^2)-2(A_1^2 + A_5^2)+2A_3(A_1-A_2-A_4+A_5)\\& 
	+3(A_1A_4+A_1A_5+A_2A_5) -4(A_1A_2+A_4A_5) +5A_2A_4=0,
\end{aligned}\en 
\eq{tel3}\begin{aligned}
&  A_2^2-A_3^2+A_4^2+3(A_1A_3+A_1A_5+A_3A_5)\\& 
		-2(A_1^2+A_5^2+A_2A_3-A_2A_4+A_3A_4)-(A_1+A_5)(A_2+A_4) =0,
	\end{aligned}\en and 
\eq{tel1}
\begin{aligned}
 & -A_1^3+A_2^3+A_3^3+A_4^3-A_5^3
	+(A_1-A_3+A_5)(A_2^2+A_4^2)\\&
	-2(A_2+A_4)(A_1^2+A_3^2+A_5^2)
	+2A_2A_4(A_1-A_3+A_5)\\& -3(A_2+A_4)\left(A_3(A_1+A_5)-A_2A_4\right)
	-3(A_1+A_5)(A_3^2+A_1A_5)\\& -4(A_1^2A_3+A_3A_5^2+A_1A_2A_5+A_1A_4A_5)+41A_1A_3A_5=0, 
\end{aligned}\en
respectively. 

We have thus reached the conclusion.
\begin{thm}\label{th:3el}A reciprocal 6-by-6 matrix $A$ has its Kippenhahn curve consisting of three concentric ellipses if and only if  $A_j$ defined by \eqref{Aj} satisfy \eqref{tel2}--\eqref{tel1}, and in addition $A_1+\cdots+A_5>1$.\end{thm}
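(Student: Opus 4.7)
My plan is to characterize when $P_6(\zeta,\tau)$ factors completely into three linear factors in $\zeta$ of the form $\zeta-(x_j\tau+z_j)$ corresponding to non-degenerate ellipses, and then translate this factorization into algebraic conditions on $A_1,\ldots,A_5$. Corollary~\ref{th:sym1} guarantees that every elliptical component of $C(A)$ is automatically centered at the origin, so Proposition~\ref{th:ell} with $y=0$ applies to each such component. Hence $C(A)$ consists of three concentric ellipses if and only if \eqref{p6} factors as $\prod_{j=1}^{3}(\zeta-(x_j\tau+z_j))$ with each $z_j>|x_j|$ to rule out degeneracy.

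The first step I would take is to expand this product and match the nine coefficients (collected by powers of $\zeta$ and $\tau$) with those of \eqref{p6}, obtaining the system already displayed in the excerpt. Three of these equations involve only the $x_j$ and assert that $x_1,x_2,x_3$ are precisely the roots of the cubic \eqref{x}. Three further equations are linear in the $z_j$; since the coefficient matrix
\[
\begin{bmatrix} 1 & 1 & 1 \\ x_2+x_3 & x_1+x_3 & x_1+x_2 \\ x_2x_3 & x_1x_3 & x_1x_2 \end{bmatrix}
\]
has determinant $(x_1-x_2)(x_1-x_3)(x_2-x_3)\neq 0$ (the roots of \eqref{x} being distinct), I would apply Cramer's rule to obtain the closed-form expressions \eqref{zj}--\eqref{R}.

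The main work then lies in the remaining three equations, which are nonlinear in the $z_j$ and express $z_1+z_2+z_3$, $z_1z_2+z_1z_3+z_2z_3$, and $z_1z_2z_3$ in terms of the $A_j$. I would substitute \eqref{zj} into each of these, clear the common denominator built from the $x_j-x_i$ factors, and then repeatedly invoke \eqref{x} to eliminate any power $x_j^k$ with $k\geq 3$. What remains after collecting terms should be, up to overall nonzero constants, exactly the three polynomial relations \eqref{tel2}, \eqref{tel3}, and \eqref{tel1} in $A_1,\ldots,A_5$.

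The hard part will be the symbolic bookkeeping in this last reduction: the raw substitution produces rational expressions that are individually very large, and one must recognise them as symmetric functions of $x_1,x_2,x_3$ and reduce them modulo \eqref{x} via Newton-type identities so that the final answer depends only on the $A_j$. Finally, I would verify that the ancillary hypothesis $A_1+\cdots+A_5>1$ (trivially satisfied whenever $A_j\geq 1$) suffices to force $z_j>|x_j|$ for each $j$, ensuring the factors genuinely correspond to ellipses rather than degenerate curves, which closes both directions of the equivalence.
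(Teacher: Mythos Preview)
Your proposal is correct and follows essentially the same route as the paper: expand the putative factorization $\prod_{j=1}^{3}(\zeta-(x_j\tau+z_j))$, read off the nine coefficient identities, use three of them to pin down the $x_j$ as the roots of \eqref{x}, solve the three linear ones for the $z_j$ via Cramer's rule to obtain \eqref{zj}--\eqref{R}, and then substitute into the remaining three nonlinear relations to arrive at \eqref{tel2}--\eqref{tel1}. The only addition you make beyond the paper is the explicit mention of checking $z_j>|x_j|$ from the side condition, which the paper leaves implicit.
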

Note that the difference of \eqref{tel2} and \eqref{tel3}, 
\eq{tel4} \begin{aligned}
& A_3^2-A_3(A_1+A_5)-2(A_2^2+A_4^2)-3(A_1A_2+A_4A_5) \\& 
	+3A_2A_4+4(A_1A_4+A_2A_5)=0,
\end{aligned}\en 
is somewhat simpler than either of these conditions, and for computational purposes it thus might be useful to replace \eqref{tel2} or \eqref{tel3} (but not both) with \eqref{tel4}. 

The semialgebraic subset ${\mathfrak M}_6$ of $\R^5_+$ defined by the system \eqref{tel2}--\eqref{tel1} is of a more complicated structure than the pairs of hyperplanes \eqref{4by4},\eqref{5by5} corresponding to $n=4,5$ cases. Even though we do not have an explicit description of ${\mathfrak M}_6$, a direct verification confirms that it contains the ray
$A_1=\ldots=A_5>0$ --- as it should, in order to agree with Theorem~\ref{th:toel}. Our next result, somewhat analogous to Corollary~\ref{th:obs4}, shows that this ray is in fact the intersection of ${\mathfrak M}_6$ with either of the hyperplanes $A_2=A_4$ and $A_1=A_5$.
\begin{thm}\label{th:A1245}If $(A_1,\ldots,A_5)\in{\mathfrak M}_6$ and $A_2=A_4$ or $A_1=A_5$, then all $A_j$ coincide.\end{thm}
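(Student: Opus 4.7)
The plan is to treat the two alternatives $A_2=A_4$ and $A_1=A_5$ in parallel, exploiting the invariance of the defining system \eqref{tel2}--\eqref{tel1} under the joint swap $(A_1,A_2)\leftrightarrow (A_5,A_4)$. In each case the strategy is to substitute the imposed equality, use the cleaner combination \eqref{tel4} to obtain a factorization that splits the analysis into a few sub-cases, and then invoke \eqref{tel1} to collapse everything to a perfect cube or a squared factor whose only non-negative solutions correspond to $A_1=\cdots=A_5$.

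\textbf{Case $A_2=A_4=:B$.} I expect \eqref{tel4}, once $A_2=A_4=B$ is substituted, to split as $(A_3-B)(A_3+B-A_1-A_5)=0$, yielding the sub-cases (i) $A_3=B$ and (ii) $A_1+A_5=A_3+B$. Introducing $S:=A_1+A_5$ and $P:=A_1 A_5$, the reduced \eqref{tel2} will then be linear in $P$, giving $7P=2S^2-BS+B^2$ in (i) or $P=B(S-B)$ in (ii). Substituting these values of $A_3$ and $P$ into the reduced \eqref{tel1}, I anticipate the result to collapse to $(S-2B)^3=0$ in either sub-case. This yields $S=2B$ and consequently $P=B^2=(S/2)^2$, so $A_1$ and $A_5$ are the double root of $t^2-2Bt+B^2$, forcing $A_1=A_5=B$; in sub-case (ii) also $A_3=S-B=B$.

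\textbf{Case $A_1=A_5=:C$.} With $\sigma:=A_2+A_4$ and $\pi:=A_2 A_4$, a convenient simplification is that all $\pi$-terms in \eqref{tel1} cancel upon substitution, leaving a cubic in $A_3$ with coefficients depending only on $\sigma$ and $C$. Reduced \eqref{tel3} rearranges as $(A_3+\sigma-3C)^2=2(\sigma-2C)^2$, a quadratic I would use to eliminate $A_3^2$ and then $A_3^3$ from that cubic. The predicted identity is
\[ 8(\sigma-2C)^2 A_3 = (\sigma-2C)^2(3\sigma+2C), \]
arising from the factorization $3\sigma^3-10C\sigma^2+4C^2\sigma+8C^3=(\sigma-2C)^2(3\sigma+2C)$. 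Either $\sigma=2C$, in which case reduced \eqref{tel3} immediately gives $A_3=3C-\sigma=C$, or $A_3=(3\sigma+2C)/8$; substituting the latter back into \eqref{tel3} should produce $-7(\sigma-2C)^2=0$, so $\sigma=2C$ and $A_3=C$ in either branch. Reduced \eqref{tel2} at these values forces $\pi=C^2$, so $A_2$ and $A_4$ are the double root of $(t-C)^2$ and all five $A_j$ equal $C$.

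The main obstacle is not conceptual but the bookkeeping required to reach these predicted identities, with the key delicate points being (a) noticing that \eqref{tel4} factors cleanly in Case~1, and (b) using \eqref{tel3} to eliminate the higher powers of $A_3$ before invoking \eqref{tel1} in Case~2; in each case the final identity is a perfect cube or carries the squared factor $(\sigma-2C)^2$ that rules out spurious positive roots. Since the diagonal ray $A_1=\cdots=A_5$ must lie in $\mathfrak{M}_6$ by Theorem~\ref{th:toel}, every intermediate formula admits a built-in sanity check at that point.
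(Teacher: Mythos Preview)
Your proposal is correct and follows the same overall architecture as the paper's proof: split into the two cases $A_2=A_4$ and $A_1=A_5$, use one of the quadratic relations \eqref{tel2}--\eqref{tel4} to reduce the number of free parameters, and then invoke the cubic \eqref{tel1} to force a perfect power to vanish. The predicted identities all hold exactly as you state (in Case~1\,(i) one gets $(2B-S)^3=0$; in Case~1\,(ii), $-7(S-2B)^3=0$; in Case~2, $8(\sigma-2C)^2A_3=(\sigma-2C)^2(3\sigma+2C)$ followed by $-\tfrac{7}{64}(\sigma-2C)^2=0$).

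There are, however, some tactical differences worth noting. The paper normalises $A_3=1$ throughout, whereas you work homogeneously with the symmetric functions $S,P$ (or $\sigma,\pi$); this is mostly cosmetic. More substantively, in Case~1 the paper pairs \eqref{tel4} with \eqref{tel3} rather than \eqref{tel2}, and in the sub-case corresponding to your (i) it disposes of \eqref{tel3} by the bare assertion that $2(x^2+y^2)-3xy-(x+y)+1=0$ has no positive solutions other than $x=y=1$, never invoking \eqref{tel1} there at all. Your route through \eqref{tel2} and \eqref{tel1} is more uniform: both sub-cases collapse to the same cube $(S-2B)^3$, so no separate positivity argument is needed. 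In Case~2 the paper solves \eqref{tel3} explicitly for $A_2+A_4$, introducing a $\pm\sqrt{2}$, and then substitutes into \eqref{tel1} to obtain $(3\pm\sqrt{2})(s-1)^3=0$; you instead use \eqref{tel3} as a reduction rule to eliminate $A_3^2$ and $A_3^3$ from \eqref{tel1}, keeping everything polynomial and extracting the common factor $(\sigma-2C)^2$ directly. Both approaches reach the same conclusion, but yours avoids the irrational branch and is somewhat tidier.
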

\begin{proof}Equations \eqref{tel2}--\eqref{tel4} are homogeneous, so by scaling without loss of generality we may set $A_3=1$. 
	
{\sl Case 1.}  $A_2=A_4:=s$. Relabeling also for convenience of notation $A_1=x, A_5=y$, we may rewrite \eqref{tel4} as  
\[ (s-1)(x+y-(s+1))=0, \] and conclude from there that $s=1$ or $x+y=s+1$.

In turn, \eqref{tel3} in our abbreviated notation amounts to 
\eq{tel31} 4s^2-4s-1+(3-2s)(x+y)-2(x^2+y^2)+3xy=0. \en
If $s=1$, \eqref{tel31} simplifies further to  \[ 2(x^2+y^2)-3xy-(x+y)+1=0, \]
which for $x,y>0$ is possible only when $x=y=1$. 

On the other hand, plugging $x+y=s+1$ into \eqref{tel31} yields $xy=s$, so that \eq{xys} x=s,\ y=1, \text{ or } x=1,\ y=s.\en  Since \eqref{tel1} in our setting  is nothing but 
\[ 8s^3-4s^2-4s+1-(x+y)^3-4s(x+y)^2+(4s^2-6s-3)(x+y)-4(x^2+y^2)+41xy=0, \]
in view of \eqref{xys} it amounts to $7(s-1)^3=0$, thus implying $s=1$. According to \eqref{xys}, then also $x=y=1$.

{\sl Case 2.} $A_1=A_5:=s$. Letting now $A_2=x, A_4=y$, rewrite \eqref{tel3} as 
\[ \left(x+y-(s+1)\right)^2-2(s-1)^2=0. \] So, \eq{x+y} x+y=s+1\pm\sqrt{2}(s-1). \en

On the other hand, in our abbreviated notation, \eqref{tel1} is nothing but 
\[ (x + y)^3 + (2 s - 1) (x + y)^2 - (8 s^2 + 6 s+2)(x + y)-8s^3+ 33 s^2-6s+1=0\] 
or, plugging in the expression for $x+y$ from \eqref{x+y}:
\begin{multline*} -1 - 10 s + 25 s^2 - 8 s^3 - (s \pm \sqrt{2} (-1 + s)) - 2 s (s \pm \sqrt{2} (-1 + s)) -  8 s^2 (s \pm \sqrt{2} (-1 + s))\\ + 2 (s \pm \sqrt{2} (-1 + s))^2 + 2 s (s \pm \sqrt{2} (-1 + s))^2 + (s \pm\sqrt{2} (-1 + s))^3=0. \end{multline*} 
The left hand side of the latter equation is simply $(3\pm\sqrt{2})(s-1)^3$, and therefore $s=1$. Equation\eqref{tel4} therefore takes the form
\[ -1-2(x+y)^2+x+y+7xy=0. \]
Observing that $x+y=2$ due to \eqref{x+y} and $s=1$, we conclude that $xy=1$, and so in fact $x=y=1$. 
\end{proof} 
Non-trivial elements of $\mathfrak M_6$ can be constructed as follows. Fix two of the parameters $A_1,A_2,A_4,A_5$ (making sure to avoid the $A_1=A_5$ or $A_2=A_4$ situations), and solve \eqref{tel2}--\eqref{tel3} for the other two, expressing the solutions as function of $A_3$. Then find $A_3$ as a solution to \eqref{tel1}.

\begin{Ex}
Set $A_1=20, A_5=40$. Then \[ A_2=64.9396,\ A_3=36.9387548,\ A_4=28.9008 \] deliver a solution to \eqref{tel2}--\eqref{tel1} The respective $C(A)$ curve is plotted in Figure~\ref{fig95outro}. 
\end{Ex}
\begin{figure}[H]
	\centering
	\includegraphics[width=0.75\linewidth,angle=0]{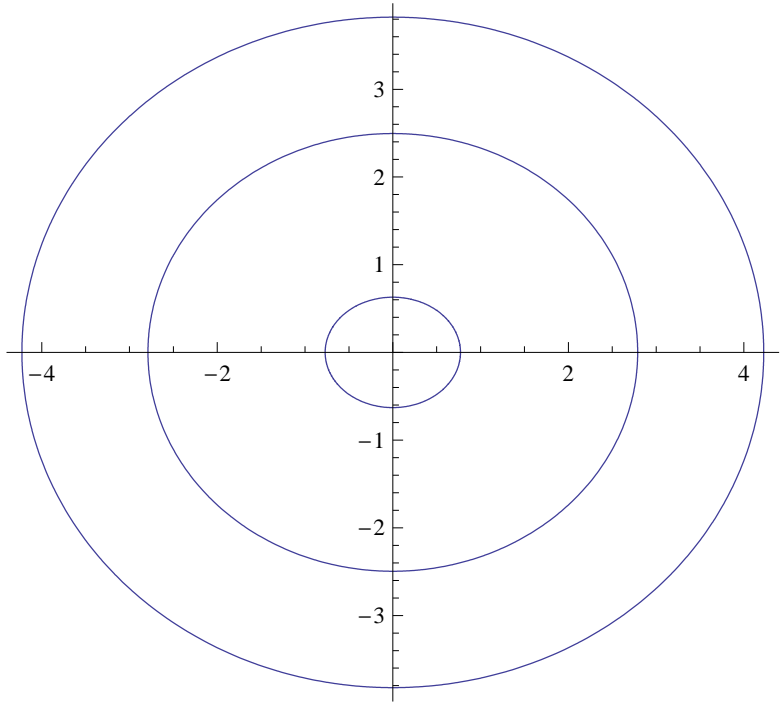}
	\caption{$C(A)$ for $A_1= 10,~A_2= 32.4698,~A_3=18.4693774,~A_4= 14.4004,~A_5=20$.}
	\label{fig95outro}
\end{figure}

\providecommand{\bysame}{\leavevmode\hbox to3em{\hrulefill}\thinspace}
\providecommand{\MR}{\relax\ifhmode\unskip\space\fi MR }
\providecommand{\MRhref}[2]{%
	\href{http://www.ams.org/mathscinet-getitem?mr=#1}{#2}
}
\providecommand{\href}[2]{#2}

\end{document}